\newcommand{\re}{\mathbb{R}}
\newcommand{\co}{\mathbb{C}}
\newcommand{\cc}{\mathcal{C}}
\newcommand{\z}{\bar z}
\newcommand{\w}{\bar w}
\newcommand{\glb}{\mbox{\rm glb}}
\newcommand{\boxx}{\rule{2.12mm}{3.43mm}}
\title[Nonlinear inequalities and almost complex structures]{Some
  nonlinear differential inequalities and an application to H\"older
  continuous almost complex structures}
\date{\today}
\author{Adam Coffman}
\address{Department of Mathematical Sciences \\ Indiana University -
  Purdue University Fort Wayne \\ 2101 E.\ Coliseum Blvd. \\ Fort
  Wayne, IN, USA 46805-1499}
\email{CoffmanA@ipfw.edu}
\author{Yifei Pan}
\email{Pan@ipfw.edu}
\newtheorem{thm}{Theorem}[section]
\newtheorem{prop}[thm]{Proposition}
\newtheorem{lem}[thm]{Lemma}
\newtheorem{cor}[thm]{Corollary}
\theoremstyle{definition}
\newtheorem{defn}[thm]{Definition}
\newtheorem{notation}[thm]{Notation}
\newtheorem{example}[thm]{Example}
\theoremstyle{remark}
\newtheorem*{rem}{Remark}
\begin{document}

\begin{abstract}
  We consider some second order quasilinear partial differential
  inequalities for real valued functions on the unit ball and find
  conditions under which there is a lower bound for the supremum of
  nonnegative solutions that do not vanish at the origin.  As a
  consequence, for complex valued functions $f(z)$ satisfying
  $\partial f/\partial\bar z=|f|^\alpha$, $0<\alpha<1$, and
  $f(0)\ne0$, there is also a lower bound for $\sup|f|$ on the unit
  disk.  For each $\alpha$, we construct a manifold with an
  $\alpha$-H\"older continuous almost complex structure where the
  Kobayashi-Royden pseudonorm is not upper semicontinuous.
\end{abstract}

\subjclass[2000]{Primary 35R45; Secondary 32F45, 32Q60, 32Q65, 35B05}

\maketitle

\section{Introduction}\label{sec0}

We begin with an analysis of a second order quasilinear partial
differential inequality for real valued functions of $n$ real
variables,
\begin{equation}\label{eq15}
  \Delta u-B|u|^\varepsilon\ge0,
\end{equation}
where $B>0$ and $\varepsilon\in[0,1)$ are constants.  In Section
  \ref{sec1}, we use a Comparison Principle argument to show that
  (\ref{eq15}) has ``no small solutions,'' in the sense that there is
  a number $M>0$ such that any nonnegative solution $u$ on the unit
  ball which is nonzero at the origin must satisfy $u(\vec x)>M$ for
  some $\vec x$.

As an application of the results on the inequality (\ref{eq15}), we
show failure of upper semicontinuity of the Kobayashi-Royden
pseudonorm for a family of $4$-dimensional manifolds with almost
complex structures of regularity $\cc^{0,\alpha}$, $0<\alpha<1$.  This
generalizes the $\alpha=\frac12$ example of \cite{ipr}; it is known
(\cite{ir}) that the Kobayashi-Royden pseudonorm is upper
semicontinuous for almost complex structures with regularity
$\cc^{1,\alpha}$.

The construction of the almost complex manifolds in Section \ref{sec3}
is similar to that of \cite{ipr}.  One of the steps in \cite{ipr} is a
Maximum Principle argument applied to a complex valued function $h(z)$
satisfying the equation $\partial h/\partial\z=|h|^{1/2}$, to get the
property of no small solutions.  Our use of a Comparison Principle in
Section \ref{sec1} is different, and we arrive at this result:
\begin{thm}\label{thm1.1}
  For any $\alpha\in(0,1)$, suppose $h(z)$ is a continuous complex
  valued function on the closed unit disk, and on the set
  $\{z:|z|<1,h(z)\ne0\}$, $h$ has continuous partial derivatives and
  satisfies
  \begin{equation}\label{eq24}
    \frac{\partial
    h}{\partial\z}=|h|^\alpha.
  \end{equation}
   If $h(0)\ne0$ then $\sup|h|>S_\alpha$, where the constant
   $S_\alpha>0$ is defined by:
    \begin{equation}\label{eq21}
  S_\alpha=\left\{\begin{array}{cl}\left(\alpha(1-\alpha)\right)^{1/(2-2\alpha)}&\mbox{ if $0<\alpha\le\frac23$}\\
\ & \ \\
\left(\frac{4\alpha(1-\alpha)^2}{2-\alpha}\right)^{1/(2-2\alpha)}&\mbox{ if $\frac23\le\alpha<1$}\end{array}\right..
  \end{equation}
\end{thm}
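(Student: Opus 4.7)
The plan is to reduce Theorem~\ref{thm1.1} to the scalar case by applying the ``no small solutions'' result for the inequality (\ref{eq15}) from Section~\ref{sec1} to the function $u=|h|^2$.

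\medskip

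First I would verify that, on the open set $\Omega=\{z:|z|<1,\ h(z)\ne0\}$, the real-valued function $u=|h|^2=h\bar h$ is of class $\cc^2$ and satisfies an inequality of the form (\ref{eq15}). Conjugating (\ref{eq24}) gives $\partial\bar h/\partial z=|h|^\alpha$ on $\Omega$; since the right-hand side is $\cc^1$ on $\Omega$, the mixed partials commute, so $h_{z\bar z}=(|h|^\alpha)_z$. Using the Wirtinger formula $\Delta=4\,\partial^2/\partial z\partial\bar z$ and differentiating $h\bar h$, a direct computation yields
\begin{equation*}
 \tfrac14\Delta u \;=\; u_{z\bar z} \;=\; \alpha|h|^{\alpha-2}\rp(h_z\bar h^2)+|h_z|^2+(1+\alpha)|h|^{2\alpha}.
\end{equation*}
The cross term is controlled by $|\rp(h_z\bar h^2)|\le |h_z||h|^2$, and completing the square in the variable $|h_z|$ gives
\begin{equation*}
 u_{z\bar z} \;\ge\; \bigl(|h_z|-\tfrac{\alpha}{2}|h|^\alpha\bigr)^2+\bigl(1+\alpha-\tfrac{\alpha^2}{4}\bigr)|h|^{2\alpha} \;\ge\; \bigl(1+\alpha-\tfrac{\alpha^2}{4}\bigr)u^\alpha,
\end{equation*}
so $\Delta u\ge B u^\alpha$ on $\Omega$ with an explicit constant $B=B(\alpha)>0$. (A side check: if one tried $u=|h|^{2\beta}$, matching the exponents of $|h|$ on the two sides forces $\beta=1$, so $u=|h|^2$ is the natural reduction.)

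\medskip

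Second, $u$ is continuous on the closed unit disk, $u\ge0$ everywhere, $u(0)=|h(0)|^2>0$, and the differential inequality $\Delta u-Bu^\alpha\ge0$ holds classically on the open set $\{u>0\}=\Omega$. This is exactly the setting of the result of Section~\ref{sec1} applied with $\varepsilon=\alpha$, which then produces an explicit positive lower bound for $\sup u$. Taking square roots converts this to a lower bound for $\sup|h|$, and the piecewise formula (\ref{eq21}) should emerge from substituting the value $B=4+4\alpha-\alpha^2$ into the bound given by the comparison principle of Section~\ref{sec1}; the dichotomy at $\alpha=\frac23$ in (\ref{eq21}) reflects the case split in that comparison argument.

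\medskip

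The principal obstacle I expect is bookkeeping rather than conceptual: the reduction $u=|h|^2$ is essentially forced, but one must verify that the Section~\ref{sec1} bound, when specialized to our $B(\alpha)$ and $\varepsilon=\alpha$, produces exactly $S_\alpha^2$ in both regimes $\alpha\le\tfrac23$ and $\alpha\ge\tfrac23$, and that the two expressions agree at $\alpha=\tfrac23$ (where indeed both branches give $S_{2/3}=(2/9)^{3/2}$). A minor technical point is the possibility that $h$ has interior zeros: the inequality (\ref{eq24}) is assumed only on $\Omega$, but $u$ is continuous and nonnegative on the whole disk, which is the exact hypothesis under which Section~\ref{sec1} is formulated.
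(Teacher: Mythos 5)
Your computation is correct as far as it goes: writing $u=h\bar h$ and differentiating twice using $h_{\bar z}=|h|^\alpha$ does give
\[
u_{z\bar z}=\alpha|h|^{\alpha-2}\rp(h_z\bar h^2)+|h_z|^2+(1+\alpha)|h|^{2\alpha},
\]
and the Cauchy-Schwarz plus completed-square estimate yields the clean scalar inequality $\Delta u\ge(4+4\alpha-\alpha^2)\,u^\alpha$ on $\omega$. (One small point you glossed over: you implicitly need $h\in\cc^2$ on $\omega$ for this, which requires the elliptic bootstrap from $\partial h/\partial\bar z=|h|^\alpha\in\cc^{0,\beta}_{loc}$; the paper handles this explicitly.)

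The genuine gap is in the last step. Feeding $B=4+4\alpha-\alpha^2$, $\varepsilon=\alpha$, $n=2$ into Lemma~\ref{lem1.2c} gives, after taking a square root, the lower bound
\[
T_\alpha:=\left(\frac{(4+4\alpha-\alpha^2)(1-\alpha)^2}{4}\right)^{\frac1{2(1-\alpha)}},
\]
and this is \emph{not} $S_\alpha$. No piecewise dichotomy ``emerges'': Lemma~\ref{lem1.2c} is a single closed formula with no case split, so the split in (\ref{eq21}) cannot come from there. Worse, $T_\alpha<S_\alpha$ once $\alpha$ exceeds a value near $0.605$, and in particular for the whole range $\frac23\le\alpha<1$. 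For instance at $\alpha=\frac23$ the base of your $T_\alpha$ is $14/81\approx0.173$ while the base of $S_\alpha$ is $2/9\approx0.222$, so $T_{2/3}=(14/81)^{3/2}<(2/9)^{3/2}=S_{2/3}$. Thus your argument proves \emph{a} no-small-solutions result, but it does not prove the stated bound $\sup|h|>S_\alpha$ for $\alpha\gtrsim0.605$; as written it simply fails for those $\alpha$.

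The paper avoids this by not fixing the auxiliary power. It applies the same comparison lemma to the one-parameter family $\zeta=|h|^{(1-\alpha)\gamma}$ (Lemma~\ref{lem1.1} and inequality (\ref{eq12})), which satisfies $\Delta\zeta\ge 2\alpha(1-\alpha)\gamma\,\zeta^{1-2/\gamma}$ once the nonnegativity constraints $\gamma\ge2$ and $\gamma\ge\frac{2-\alpha}{2-2\alpha}$ hold. The resulting lower bound $\left(\frac{2\alpha(1-\alpha)}{\gamma}\right)^{1/(2(1-\alpha))}$ is then maximized by taking $\gamma$ as small as allowed, $\gamma=\max\{2,\frac{2-\alpha}{2-2\alpha}\}$, and it is the switch between which constraint binds that produces the $\alpha=\frac23$ dichotomy in $S_\alpha$. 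Your choice $u=|h|^2$ corresponds to the fixed $\gamma=2/(1-\alpha)$, which is admissible but far from optimal when $\alpha$ is not small. (As a side note, for small $\alpha$ your direct estimate actually beats $S_\alpha$, because you retain more of the cross term than the paper's derivation of (\ref{eq12}) does; but that does not help you where the claim is weakest.) To repair the proof in the spirit of your reduction, you would need to carry a free exponent, say $u=|h|^{2\beta}$, redo the estimate, and optimize over $\beta$, which is essentially what the paper's $\gamma$ accomplishes.
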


Section \ref{sec1b} continues with an inequality related to (\ref{eq15}):
\begin{equation}\label{eq04}
  u\Delta u-B|u|^{1+\varepsilon}-C|\vec\nabla u|^2\ge0.
\end{equation}
For constants $B>0$, $C<1$ and $\varepsilon\le C$ (in particular, $C$
and $\varepsilon$ can be negative), Theorems \ref{lem0.3} and
\ref{lem0.2} show a similar property of no small solutions, using
elementary methods.

\section{Some differential inequalities}\label{sec1}

Let $D_R$ denote the open ball in $\re^n$ centered at $\vec0$, and let
$\overline{D}_R$ denote the closed ball.
\begin{lem}\label{lem1.2c}
  Given constants $B>0$ and $0\le\varepsilon<1$, let
  $$M=\left(\frac{B(1-\varepsilon)^2}{2(2\varepsilon+n(1-\varepsilon))}\right)^{\frac1{1-\varepsilon}}>0.$$
  Suppose the function $u:{\overline{D}}_1\to\re$ satisfies:
  \begin{itemize}
    \item $u$ is continuous on ${\overline{D}}_1$,
    \item $u(\vec x)\ge0$ for $\vec x\in D_1$,
    \item on the open set
  $\omega=\{\vec x\in D_1:u(\vec x)\ne0\}$, $u\in\cc^2(\omega)$,
    \item for $\vec x\in\omega$:
  \begin{eqnarray}
    \Delta u(\vec x)-B(u(\vec x))^{\varepsilon}\ge0.\label{eq01}
  \end{eqnarray}
  \end{itemize}
  If $u(\vec0)\ne0$, then $\displaystyle{\sup_{\vec x\in D_1}u(\vec x)>M}$.
\end{lem}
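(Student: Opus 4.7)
The strategy is a comparison argument with an explicit radial solution of the associated equation $\Delta w=Bw^\varepsilon$. A power ansatz $w(\vec x)=c|\vec x|^\beta$ forces $\beta=2/(1-\varepsilon)$ from matching the powers of $|\vec x|$, and then matching coefficients gives $c^{1-\varepsilon}=B/\bigl(\beta(\beta+n-2)\bigr)$; a routine simplification (using $\beta+n-2=(n(1-\varepsilon)+2\varepsilon)/(1-\varepsilon)$) shows that this $c$ is precisely the constant $M$ in the statement. Since $\beta\ge 2$, the function $w(\vec x)=M|\vec x|^\beta$ is of class $\cc^2(\overline{D}_1)$, vanishes at $\vec 0$, equals $M$ on $\partial D_1$, and satisfies $\Delta w=Bw^\varepsilon$ on all of $\overline{D}_1$ (trivially at the origin when $\beta>2$, and by direct check when $\beta=2$, i.e., $\varepsilon=0$).

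With this barrier in hand I would argue by contradiction: suppose $\sup_{D_1}u\le M$, and let $\omega_0$ be the connected component of $\omega=\{u>0\}$ containing $\vec 0$. Setting $\psi=u-w$, the boundary values are controlled: on $\partial\omega_0\cap\partial D_1$ one has $u\le M=w$, and on $\partial\omega_0\cap D_1$ continuity forces $u=0\le w$, so $\psi\le 0$ on $\partial\omega_0$. On the open set $V=\{\psi>0\}\cap\omega_0$ we have $u>w\ge 0$, so the monotonicity of $t\mapsto t^\varepsilon$ gives
\[
\Delta\psi=\Delta u-\Delta w\ge B(u^\varepsilon-w^\varepsilon)\ge 0,
\]
and $\psi$ is subharmonic on $V$. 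Since $\psi=0$ on the part of $\partial V$ interior to $\omega_0$ and $\psi\le 0$ on the part meeting $\partial\omega_0$, the weak maximum principle forces $\sup_V\psi\le 0$; hence $V=\emptyset$, $u\le w$ on $\omega_0$, and in particular $0<u(\vec 0)\le w(\vec 0)=0$, which is absurd.

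The main subtlety I expect is the low regularity of $u$: it is only $\cc^2$ on $\omega$, and $\omega$ may be a proper open subset of $D_1$ with complicated topology, so a comparison principle cannot be invoked on $D_1$ directly. Restricting the argument to $\omega_0$ and using $u=0$ on the interior part of $\partial\omega_0$ circumvents this. A secondary check is that $w$ really is a $\cc^2$ supersolution across the origin itself, which holds because $\beta\ge 2$; with those two bookkeeping points settled, the maximum principle step should be routine.
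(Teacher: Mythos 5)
Your argument is correct, and it uses the same comparison function as the paper: the explicit radial solution $v(\vec x)=M|\vec x|^{2/(1-\varepsilon)}$ of $\Delta v=Bv^\varepsilon$, together with the observation that $u=0$ on the interior part of the boundary of $\{u>0\}$ and $u\le M=v$ on $\partial D_1$. Where you diverge is in justifying the comparison step. The paper invokes the nonlinear Comparison Principle (Gilbarg--Trudinger, Theorem 10.1) on all of $\omega$, noting only that the relevant hypothesis is the weak monotonicity of $t\mapsto -Bt^\varepsilon$. You instead localize to the component $\omega_0$ of $\omega$ containing the origin, set $\psi=u-v$, and prove the comparison by hand: on the set $V=\{\psi>0\}\cap\omega_0$ the monotonicity of $t\mapsto t^\varepsilon$ makes $\psi$ subharmonic, while $\psi\le 0$ on $\partial V$ (whether that boundary lies in $\omega_0$, on $\partial\omega_0\cap D_1$ where $u=0$, or on $\partial D_1$), so the weak maximum principle for subharmonic functions forces $V=\emptyset$. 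This buys a self-contained, elementary proof that does not cite the general quasilinear comparison theorem, at the cost of the small bookkeeping you yourself flagged (restricting to $\omega_0$, checking $\beta=2/(1-\varepsilon)\ge 2$ so that $v$ is $\cc^2$ across the origin, and tracking the pieces of $\partial V$). Both routes end at the same contradiction $0<u(\vec 0)\le v(\vec 0)=0$, and both rely on $B>0$ and $\varepsilon\ge 0$ in exactly the same place, namely the monotonicity of the zeroth-order term.
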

\begin{proof}
  Define a comparison function
  $$v(\vec x)=M|\vec x|^{\frac2{1-\varepsilon}},$$ so
  $v\in\cc^2(\re^n)$ since $0\le\varepsilon<1$.  By construction of
  $M$, it can be checked that $v$ is a solution of this nonlinear
  Poisson equation on the domain $\re^n$: $$\Delta v(\vec x)-B|v(\vec
  x)|^\varepsilon\equiv0.$$

  Suppose, toward a contradiction, that $u(\vec x)\le M$ for all $\vec
  x\in D_1$.  For a point $\vec x_0$ on the boundary of $\omega$,
  either $|\vec x_0|=1$, in which case by continuity, $u(\vec x_0)\le
  M=v(\vec x_0)$, or $0<|\vec x_0|<1$ and $u(\vec x_0)=0$, so $u(\vec
  x_0)\le v(\vec x_0)$.  Since $u\le v$ on the boundary of $\omega$,
  the Comparison Principle (\cite{gt} Theorem 10.1) applies to the
  subsolution $u$ and the solution $v$ on the domain $\omega$.  The
  relevant hypothesis for the Comparison Principle in this case is
  that the second term expression of (\ref{eq01}), $-BX^\varepsilon$,
  is weakly decreasing, which uses $B>0$ and $\varepsilon\ge0$.  (To
  satisfy this technical condition for all $X\in\re$, we define a
  function $c:\re\to\re$ by $c(X)=-BX^\varepsilon$ for $X\ge0$, and
  $c(X)=0$ for $X\le0$.  Then $c$ is weakly decreasing in $X$, $v$
  satisfies $\Delta v(\vec x)+c(v(\vec x))\equiv0$ and $u$ satisfies
  $\Delta u(\vec x)+c(u(\vec x))\ge0$.)

  The conclusion of the Comparison Principle is that $u\le v$ on
  $\omega$, however $\vec0\in\omega$ and $u(\vec0)>v(\vec0)$, a
  contradiction.
\end{proof}

Of course, the constant function $u\equiv0$ satisfies the inequality
(\ref{eq01}), and so does the radial comparison function $v$, so the
initial condition $u(\vec0)\ne0$ is necessary.

\begin{example}\label{ex2.2}
  In the $n=1$ case,
  $M=\left(\frac{B(1-\varepsilon)^2}{2(1+\varepsilon)}\right)^{\frac1{1-\varepsilon}}$.
  For points $c_1$, $c_2\in\re$, $c_1<c_2$, define a
  function $$u(x)=\left\{\begin{array}{cl}M(x-c_2)^{\frac2{1-\varepsilon}}&\mbox{if
    $x\ge c_2$}\\ \ & \ \\ 0&\mbox{if $c_1\le x\le c_2$}\\ \ &
  \ \\M(c_1-x)^{\frac2{1-\varepsilon}}&\mbox{if $x\le
    c_1$}\end{array}\right..$$ Then $u\in\cc^2(\re)$, and it is
  nonnegative and satisfies $u^{\prime\prime}=B|u|^\varepsilon$ (the
  case of equality in the $n=1$ version of (\ref{eq01})).  For
  $c_1<0<c_2$, this gives an infinite collection of solutions of the
  ODE $u^{\prime\prime}=B|u|^\varepsilon$ which are identically zero
  in a neighborhood of $0$, so the ODE does not have a unique
  continuation property.  For $c_1>0$ or $c_2<0$, the function $u$
  satisfies $u(0)\ne0$ and the other hypotheses of Lemma
  \ref{lem1.2c}, and its supremum on $(-1,1)$ exceeds $M$ even though
  it can be identically zero on an interval not containing $0$.
\end{example}
\begin{example}\label{ex2.3}
  In the case $n=2$, $B=1$, $\varepsilon=0$, (\ref{eq01}) becomes the
  linear inequality $\Delta u\ge1$ and the number $M=\frac14$ agrees
  with Lemma 2 of \cite{ipr}, which was proved there using a Maximum
  Principle argument.
\end{example}
The next Lemma shows how an inequality like (\ref{eq01}) with $n=2$
can arise from a first order PDE for a complex valued function.  By
introducing the parameter $\gamma$, the Proof is a generalization of a
calculation appearing in \cite{ipr}.  Let $z=x+iy$ be the coordinate
on $\co$.
\begin{lem}\label{lem1.1}
  Consider constants $\alpha$, $\gamma$ with $0<\alpha<1$ and
  $\gamma\ge\frac{2-\alpha}{2-2\alpha}$.  Let $\omega\subseteq\co$ be
  an open set, and suppose $h:\omega\to\co$ satisfies:
  \begin{itemize}
    \item $h\in\cc^1(\omega)$,
    \item $h(z)\ne0$ for all $z\in \omega$,
    \item $\displaystyle{\frac{\partial h}{\partial\z}=|h|^\alpha}$
      on $\omega$.
  \end{itemize}
  Then, the following inequality is satisfied on $\omega$:
  $$\Delta(|h|^{(1-\alpha)\gamma})\ge2\alpha(1-\alpha)\gamma|h|^{(1-\alpha)(\gamma-2)}.$$
\end{lem}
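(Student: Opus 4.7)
The plan is to compute $\Delta|h|^{(1-\alpha)\gamma}$ by direct expansion, use the PDE to eliminate second derivatives of $h$, and then complete a square in $h_z$.

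Setting $p=(1-\alpha)\gamma$ and writing $|h|^p=(h\bar h)^{p/2}$, I would apply $\Delta=4\partial_z\partial_{\bar z}$ via the chain rule. The PDE supplies $h_{\bar z}=|h|^\alpha$ and, by taking complex conjugates, $\bar h_z=|h|^\alpha$; differentiating the relation $h_{\bar z}=(h\bar h)^{\alpha/2}$ in $z$ yields $h_{z\bar z}=\tfrac{\alpha}{2}|h|^{\alpha-2}(h_z\bar h+h|h|^\alpha)$, with the conjugate formula for $\bar h_{z\bar z}$. Carrying out the two differentiations and collecting terms, I expect to arrive at the closed form
\[
\Delta|h|^p = p|h|^{p-4}\Big\{p|h_z|^2|h|^2 + 2(p-2+\alpha)\rp(h_z\bar h^2)|h|^\alpha + p|h|^{2\alpha+2} + 2\alpha|h|^{\alpha+2}\Big\}.
\]

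Next, viewing the first three terms inside the brace as a real-valued quadratic form in the complex variable $h_z$ (with $h,\bar h$ frozen), I would complete the square. Since $|\bar h^2|^2=|h|^4$, the minimum over $h_z\in\co$ of $p|h|^2|h_z|^2+2(p-2+\alpha)|h|^\alpha\rp(h_z\bar h^2)$ is $-(p-2+\alpha)^2|h|^{2\alpha+2}/p$. Combining with the remaining $p|h|^{2\alpha+2}$ via the difference-of-squares factorization $p^2-(p-2+\alpha)^2=(2-\alpha)(2p-2+\alpha)$ gives the pointwise estimate
\[
\Delta|h|^p \geq (2-\alpha)(2p-2+\alpha)|h|^{p+2\alpha-2} + 2\alpha p|h|^{p+\alpha-2}.
\]

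The hypothesis $\gamma\geq(2-\alpha)/(2-2\alpha)$ is equivalent to $p\geq(2-\alpha)/2$, i.e.\ $2p-2+\alpha\geq 0$, so the first term on the right is nonnegative; the target bound $2\alpha(1-\alpha)\gamma|h|^{(1-\alpha)(\gamma-2)}=2\alpha p|h|^{p-2+2\alpha}$ then follows by comparing the resulting combination against the target exponent. The hardest step will be the initial expansion: tracking the first and second derivatives of $(h\bar h)^{p/2}$, substituting the PDE-derived relations, and verifying that all mixed-derivative contributions combine to produce precisely the coefficient $2(p-2+\alpha)$ on the $\rp(h_z\bar h^2)|h|^\alpha$ term. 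Once the closed form is in hand, the algebraic identity $p^2-(p-2+\alpha)^2=(2-\alpha)(2p-2+\alpha)$ together with the sign condition on $2p-2+\alpha$ produced by the $\gamma$ hypothesis do the rest of the work.
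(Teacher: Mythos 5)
Your plan — expand $\Delta |h|^p$ with Wirtinger derivatives, substitute $h_{\bar z}=|h|^\alpha$ and $\bar h_z=|h|^\alpha$, and complete a square in $h_z$ — is sound in outline and actually more direct than the paper's route (which passes to $g=h^{1-\alpha}$, goes to polar form $g=\rho e^{i\phi}$, and eliminates $\phi$ from a coupled system). But there are two problems, one of which breaks the argument as written.

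The decisive issue is in your closed form for $\Delta|h|^p$. Carrying out the two Wirtinger differentiations carefully and grouping by powers of $|h|$ yields
\[
\Delta|h|^p = p\,|h|^{p-4}\Big\{\,p\,|h_z|^2|h|^2 + 2(p-2+\alpha)\,\rp(h_z\bar h^2)\,|h|^\alpha + (p+2\alpha)\,|h|^{2\alpha+2}\,\Big\},
\]
not the formula you wrote. Your version splits the last coefficient into $p|h|^{2\alpha+2}+2\alpha|h|^{\alpha+2}$; the exponent on the second piece should be $2\alpha+2$, not $\alpha+2$ (the term comes from $h_{z\bar z}\bar h + h\bar h_{z\bar z}$ times the factor $|h|^{p-2}$, and $|h|^{\alpha-2}\cdot|h|^{\alpha+2}=|h|^{2\alpha}$, which then picks up $|h|^2$). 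This matters because your subsequent completion of the square produces, after minimizing over $h_z$ and using $p^2-(p-2+\alpha)^2=(2-\alpha)(2p-2+\alpha)$, a lower bound of the form $(2-\alpha)(2p-2+\alpha)|h|^{p+2\alpha-2}+2\alpha p\,|h|^{p+\alpha-2}$. The target is $2\alpha p\,|h|^{p+2\alpha-2}$, and since the two exponents $p+\alpha-2$ and $p+2\alpha-2$ differ, and $|h|$ need not be bounded by $1$, you cannot ``compare against the target exponent'' termwise; matching via the first term instead would force $\gamma\ge(2-\alpha)^2/(4(1-\alpha)^2)$, which is strictly stronger than the stated hypothesis $\gamma\ge(2-\alpha)/(2-2\alpha)$ for $0<\alpha<1$. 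With the corrected coefficient $(p+2\alpha)|h|^{2\alpha+2}$, your completion of the square gives $\Delta|h|^p \ge \bigl[(2-\alpha)(2p-2+\alpha)+2\alpha p\bigr]|h|^{p+2\alpha-2}$, and then $2p-2+\alpha\ge0$ (equivalent to $\gamma\ge(2-\alpha)/(2-2\alpha)$) immediately yields $\Delta|h|^p\ge 2\alpha p\,|h|^{p+2\alpha-2}$, which is exactly the claimed inequality.

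There is also a secondary gap: you differentiate the relation $h_{\bar z}=|h|^\alpha$ in $z$ to get $h_{z\bar z}$, and you take $\Delta$ of $|h|^p$, but the hypothesis only gives $h\in\cc^1$. The paper devotes its opening paragraph to exactly this point, bootstrapping the regularity of $h$ via an elliptic regularity theorem (Astala--Iwaniec--Martin) so that the second derivatives used in the computation are legitimate. Your proposal needs the same preliminary step.
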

\begin{rem}
  The parameter $\gamma$ can be chosen arbitrarily large; to apply
  Lemma \ref{lem1.2c} to get the ``no small solutions'' result of
  Theorem \ref{thm1.1}, we need the RHS exponent
  $(1-\alpha)(\gamma-2)$ to be nonnegative, so $\gamma\ge2$.  In
  contrast, the case appearing in Lemma 1 of \cite{ipr} is
  $\alpha=\frac12$, $\gamma=\frac32$, so the exponent is $-\frac14$.
  Their approach to the ``no small solutions'' property (\cite{ipr}
  Theorem 2) is to use the negative exponent together with the result
  of Example \ref{ex2.3} to show that assuming $h$ has a small
  solution leads to a contradiction.  As claimed, such an argument can
  be generalized to apply to other nonpositive exponents, but
  $\frac{2-\alpha}{2-2\alpha}\le\gamma\le2$ holds only for
  $\alpha\le\frac23$.
\end{rem}
\begin{proof}[Proof of Lemma \ref{lem1.1}]
  We first want to show that $h$ is smooth on $\omega$, applying the
  regularity and bootstrapping technique of PDE to the equation
  $\partial h/\partial\z=|h|^\alpha$.  We recall the following fact
  (for a more general statement, see Theorem 15.6.2 of \cite{aim}):
  for a nonnegative integer $\ell$, and $0<\beta<1$, if
  $\varphi\in\cc^{\ell,\beta}_{loc}(\omega)$ and $u$ has first
  derivatives in $L^2_{loc}(\omega)$ and is a solution of $\partial
  u/\partial\z=\varphi$, then $u\in\cc^{\ell+1,\beta}_{loc}(\omega)$.
  In our case,
  $\varphi=|h|^\alpha\in\cc^1(\omega)\subseteq\cc^{0,\beta}_{loc}(\omega)$
  (since $h\in\cc^1(\omega)$ and is nonvanishing), and $u=h$ has
  continuous first derivatives, so we can conclude that
  $u=h\in\cc^{1,\beta}_{loc}(\omega)$.  Repeating gives that
  $h\in\cc^{2,\beta}_{loc}(\omega)$, etc.

  Since the conclusion is a local statement, it is enough to express
  $\omega$ as a union of simply connected open subsets $\omega_k$ and
  establish the conclusion on each subset.

  On the set $\omega_k$, there is a single-valued branch of $\log(h)$,
  so that the function
  $g(z)=(h(z))^{1-\alpha}=e^{(1-\alpha)\log(h(z))}$ is well-defined,
  smooth, and nonvanishing.
  \begin{eqnarray*}
    \frac{\partial g}{\partial\z}&=&(1-\alpha)h^{-\alpha}\frac{\partial h}{\partial\z}\\
    &=&(1-\alpha)h^{-\alpha}|h|^\alpha=(1-\alpha)h^{-\alpha}(h^{\alpha/2}\bar h^{\alpha/2})\\
    &=&(1-\alpha)\frac{\bar h^{\alpha/2}}{h^{\alpha/2}}=(1-\alpha)\left(\frac{\bar g}{g}\right)^{\frac{\alpha}{2(1-\alpha)}}.
   \end{eqnarray*}
  Let $g(z)=\rho e^{i\phi}$ be the polar form of $g$, for smooth real
  functions $\rho(z)>0$, $\phi(z)$.  Then the above equation turns into
  an equation which is first-order linear in $\rho$:
  \begin{eqnarray*}
    \frac{\partial}{\partial\z}(\rho e^
         {i\phi})&=&(1-\alpha)\left(e^{-2i\phi}\right)^{\frac{\alpha}{2(1-\alpha)}}\\ \implies\frac{\partial\rho}{\partial\z}+\rho
         i\frac{\partial\phi}{\partial\z}&=&(1-\alpha)e^{-i\frac{1}{1-\alpha}\phi}.
  \end{eqnarray*}
  The real and imaginary parts can be expressed in terms of the $x$
  and $y$ derivatives:
  \begin{eqnarray}
    \rho_x-\rho\phi_y&=&2(1-\alpha)\cos(\frac1{1-\alpha}\phi),\label{eq3}\\
    \rho_y+\rho\phi_x&=&-2(1-\alpha)\sin(\frac1{1-\alpha}\phi).\label{eq4}
  \end{eqnarray}
  The conclusion of the Lemma refers only to
  $|h|=\rho^{\frac1{1-\alpha}}$, so the remaining steps have the goal
  of eliminating $\phi$ from the system of equations.

  Multiplying (\ref{eq4}) by $\phi_x$ and (\ref{eq3}) by $\phi_y$ and
  subtracting gives:
  \begin{eqnarray}
    &&(\phi_x^2+\phi_y^2)\rho+\rho_y\phi_x-\rho_x\phi_y\nonumber\\
    &=&-2(1-\alpha)(\phi_y\cos(\frac1{1-\alpha}\phi)+\phi_x\sin(\frac1{1-\alpha}\phi)).\label{eq5}
  \end{eqnarray}
  The $x$ and $y$ derivatives of (\ref{eq3}) and (\ref{eq4}) are, respectively:
  \begin{eqnarray*}
    \rho_{xx}-\rho_x\phi_y-\rho\phi_{xy}&=&-2\sin(\frac1{1-\alpha}\phi)\phi_x,\\
    \rho_{yy}+\rho_y\phi_x+\rho\phi_{xy}&=&-2\cos(\frac1{1-\alpha}\phi)\phi_y,
  \end{eqnarray*}
  and adding gives a sum equal to a scalar multiple of the RHS of (\ref{eq5}):
  \begin{eqnarray}
    \Delta\rho+(\rho_y\phi_x-\rho_x\phi_y)&=&-2\left(\phi_y\cos(\frac1{1-\alpha}\phi)+\phi_x\sin(\frac1{1-\alpha}\phi)\right)\nonumber\\ &=&\frac1{1-\alpha}\left((\phi_x^2+\phi_y^2)\rho+\rho_y\phi_x-\rho_x\phi_y\right)\nonumber
  \end{eqnarray}
  \begin{equation}\label{eq6}
  \implies\Delta\rho-\frac1{1-\alpha}(\phi_x^2+\phi_y^2)\rho=\frac{\alpha}{1-\alpha}(\rho_y\phi_x-\rho_x\phi_y).
  \end{equation}
  The sum of squares of (\ref{eq3}) and (\ref{eq4}) is:
  \begin{eqnarray}
    4(1-\alpha)^2&=&(\rho_x-\rho\phi_y)^2+(\rho_y+\rho\phi_x)^2\nonumber\\
    &=&\rho_x^2+\rho_y^2+(\phi_x^2+\phi_y^2)\rho^2+2(\rho_y\phi_x-\rho_x\phi_y)\rho.\label{eq7}
  \end{eqnarray}
  Multiplying (\ref{eq6}) by $2\rho$ and combining with $(\ref{eq7})$ gives:
  \begin{eqnarray}
    &&2\rho\Delta\rho-\frac{2}{1-\alpha}(\phi_x^2+\phi_y^2)\rho^2\nonumber\\ &=&\frac{2\alpha}{1-\alpha}(\rho_y\phi_x-\rho_x\phi_y)\rho\nonumber\\ &=&\frac{\alpha}{1-\alpha}\left(4(1-\alpha)^2-(\rho_x^2+\rho_y^2+(\phi_x^2+\phi_y^2)\rho^2)\right)\nonumber\\ &\implies&2\rho\Delta\rho+\frac{\alpha}{1-\alpha}(\rho_x^2+\rho_y^2)\label{eq8}
    \\ &=&4\alpha(1-\alpha)+\frac{2-\alpha}{1-\alpha}(\phi_x^2+\phi_y^2)\rho^2.\nonumber
  \end{eqnarray}
  Considering the constant $\gamma>1$, the Laplacians satisfy:
  \begin{eqnarray*}
    \Delta(\rho^\gamma)&=&\gamma(\gamma-1)(\rho_x^2+\rho_y^2)\rho^{\gamma-2}+\gamma\rho^{\gamma-1}\Delta\rho\\ \implies\Delta\rho&=&\frac1{\gamma}\rho^{1-\gamma}\Delta(\rho^\gamma)-(\gamma-1)(\rho_x^2+\rho_y^2)\rho^{-1}.
  \end{eqnarray*}
  Substituting this expression into (\ref{eq8}) gives:
  \begin{eqnarray}
  &&\frac2{\gamma}\rho^{2-\gamma}\Delta(\rho^\gamma)\label{eq10}\\
  &=&4\alpha(1-\alpha)+\frac{2-\alpha}{1-\alpha}(\phi_x^2+\phi_y^2)\rho^2+\left(2(\gamma-1)-\frac{\alpha}{1-\alpha}\right)(\rho_x^2+\rho_y^2).\nonumber
  \end{eqnarray}
  By the hypotheses $0<\alpha<1$ and
  $\gamma\ge\frac{2-\alpha}{2-2\alpha}$, the coefficients
  $\frac{2-\alpha}{1-\alpha}$ and
  $2(\gamma-1)-\frac{\alpha}{1-\alpha}$ are nonnegative.  The
  conclusion is:
  \begin{eqnarray*}
    \frac2{\gamma}\rho^{2-\gamma}\Delta(\rho^\gamma)&\ge&4\alpha(1-\alpha)\\
    \implies\Delta(\rho^\gamma)&\ge&2\alpha(1-\alpha)\gamma\rho^{\gamma-2}\\
    \implies\Delta(|h^{1-\alpha}|^\gamma)&\ge&2\alpha(1-\alpha)\gamma|h^{1-\alpha}|^{\gamma-2}.
  \end{eqnarray*}
\end{proof}

\pagebreak

\begin{proof}[Proof of Theorem \ref{thm1.1}]
  Regarding the function $\rho=|h|^{1-\alpha}$ on the set $\omega$, a
  more precise conclusion from the hypotheses of Lemma \ref{lem1.1},
  which uses only $\gamma\ne0$, follows from (\ref{eq10}):
\begin{equation}\label{eq11}
  \frac2{\gamma}\rho^{2-\gamma}\Delta(\rho^\gamma)\ge4\alpha(1-\alpha)+\left(2(\gamma-1)-\frac{\alpha}{1-\alpha}\right)(\rho_x^2+\rho_y^2).
\end{equation}
Setting $\zeta(z)=\rho^\gamma=|h|^{(1-\alpha)\gamma}$, $\gamma>0$,
$\zeta$ is smooth on $\omega$ and (\ref{eq11}) implies the following
second order quasilinear differential inequality:
  \begin{equation}\label{eq12}
    \zeta\Delta\zeta\ge2\alpha(1-\alpha)\gamma|\zeta|^{1+(1-\frac2\gamma)}+\frac{2(\gamma-1)-\frac\alpha{1-\alpha}}{2\gamma}|\vec\nabla\zeta|^2.
  \end{equation}

In particular, if $h:{\overline{D}}_1\to\co$ is continuous, and on the
set $\omega=\{z\in D_1:h(z)\ne0\}$, $h\in\cc^1(\omega)$, then Lemma
\ref{lem1.2c} applies to (\ref{eq12}) for sufficiently large $\gamma$.
The hypotheses of Lemma \ref{lem1.2c} are satisfied with $n=2$,
$u=\zeta$, and $B=2\alpha(1-\alpha)\gamma>0$, when the second RHS term
of (\ref{eq12}) has a nonnegative coefficient
($\gamma\ge\frac{2-\alpha}{2-2\alpha}$) and the quantity
$\varepsilon=1-\frac2\gamma$ is in $[0,1)$ (for $\gamma\ge2$).  The
  conclusion of Lemma \ref{lem1.2c} is:
  \begin{eqnarray}
    \displaystyle{\sup_{z\in
        D_1}\zeta(z)}&>&M=\left(\frac14\cdot2\alpha(1-\alpha)\gamma(\frac2\gamma)^2\right)^{\gamma/2}\nonumber\\ \implies\displaystyle{\sup_{z\in
        D_1}|h(z)|}&>&\left(2\alpha(1-\alpha)\frac1\gamma\right)^{\frac1{2(1-\alpha)}}.\label{eq20}
  \end{eqnarray}
    We can choose
    $\gamma=\max\left\{2,\frac{2-\alpha}{2-2\alpha}\right\}$, so that
    the lower bound for the $\sup$ is $S_\alpha$ as appearing in
    (\ref{eq21}).
\end{proof}
Note that $S_\alpha\to0^+$ as $\alpha\to1^-$, and for
$\alpha=\frac23$, $S_{2/3}=\frac{2\sqrt{2}}{27}\approx0.10475656$.
This Theorem is used in the Proof of Theorem \ref{thm1}.

\begin{example}\label{ex2.5}
  As noted by \cite{ipr}, a one-dimensional analogue of Equation
  (\ref{eq24}) in Theorem \ref{thm1.1} is the well-known (for example,
  \cite{br} \S I.9) ODE $u^\prime(x)=B|u(x)|^\alpha$ for $0<\alpha<1$ and $B>0$,
  which can be solved explicitly.  By an elementary separation of
  variables calculation, the solution on an interval where $u\ne0$ is
  $|u(x)|=(\pm(1-\alpha)(Bx+C))^{\frac1{1-\alpha}}$.  The general
  solution on the domain $\re$ is, for
  $c_1<c_2$, $$u(x)=\left\{\begin{array}{cl}(1-\alpha)^{\frac1{1-\alpha}}(Bx-c_2)^{\frac1{1-\alpha}}&\mbox{if
    $x\ge c_2$}\\ \ & \ \\ 0&\mbox{if $c_1\le x\le c_2$}\\ \ &
  \ \\-(1-\alpha)^{\frac1{1-\alpha}}(c_1-Bx)^{\frac1{1-\alpha}}&\mbox{if
    $x\le c_1$}\end{array}\right..$$ So $u\in\cc^1(\re)$, and if
  $u(0)\ne0$, then
  $\displaystyle{\sup_{-1<x<1}|u(x)|>((1-\alpha)B)^{\frac1{1-\alpha}}}$.
\end{example}

\section{Lemmas for holomorphic maps}\label{sec2} 

We continue with the $D_R$ notation for the open disk in the complex
plane centered at the origin.  The following quantitative Lemmas on
inverses of holomorphic functions are used in a normal form step in
the Proof of Theorem \ref{thm1}.
\begin{lem}[\cite{garnett} Exercise I.1.]\label{prop1.2}
  Suppose $f:D_1\to D_1$ is holomorphic, with $f(0)=0$,
  $|f^\prime(0)|=\delta>0$.  For any $\eta\in(0,\delta)$, let
  $s=\left(\frac{\delta-\eta}{1-\eta\delta}\right)\eta$; then the
  restricted function $f:D_\eta\to D_1$ takes on each value $w\in D_s$
  exactly once.
\end{lem}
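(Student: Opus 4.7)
The plan is to write $f(z) = z\,g(z)$, where $g(z) = f(z)/z$ extends to a holomorphic function on $D_1$ with $g(0) = f^\prime(0)$ and therefore $|g(0)| = \delta$. By the Schwarz Lemma $|f(z)| \le |z|$, so $|g(z)| \le 1$ on $D_1$, and the maximum principle together with $|g(0)| = \delta < 1$ upgrades this to $|g(z)| < 1$. Applying the Schwarz--Pick inequality to $g$ at the origin then gives, for every $z \in D_1$,
\[
  \left|\frac{g(z) - g(0)}{1 - \overline{g(0)}\,g(z)}\right| \le |z|.
\]

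From this single estimate I would extract two facts. First, a Euclidean lower bound on $|g(z)|$ for $|z| = \eta$: after a rotation in $z$ I may assume $g(0) = \delta$ is a positive real number, in which case the inequality confines $g(z)$ to the closed pseudo-hyperbolic disk about $\delta$ of pseudo-hyperbolic radius $\eta$---a Euclidean disk symmetric about the real axis, whose two real boundary points are the solutions $(\delta \pm \eta)/(1 \pm \eta\delta)$ of $u - \delta = \pm\eta(1 - \delta u)$. The nearer of these to the origin is $(\delta - \eta)/(1 - \eta\delta)$, which is positive precisely because $\eta < \delta$. Multiplying by $|z| = \eta$ yields $|f(z)| \ge s$ on $|z| = \eta$. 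Second, the same Schwarz--Pick inequality rules out $g(z) = 0$ whenever $|z| < \delta$, since $g(z) = 0$ would force $\delta = |g(0)| \le |z|$; hence $f(z) = z\,g(z)$ has a simple zero at the origin and no other zero in $\overline{D_\eta}$.

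With these two ingredients the theorem follows from Rouch\'e's theorem: for any $w \in D_s$ and any $z$ on the circle $|z| = \eta$ one has $|w| < s \le |f(z)|$, so the holomorphic functions $f(z)$ and $f(z) - w$ have the same number of zeros in $D_\eta$. Since $f$ has exactly one such zero, so does $f - w$, yielding a unique preimage $z \in D_\eta$ for each $w \in D_s$.

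The main obstacle is the bookkeeping in the first extraction---converting the pseudo-hyperbolic containment into the sharp Euclidean lower bound $(\delta - \eta)/(1 - \eta\delta)$ and checking that after multiplication by $\eta$ it matches the quantity $s$ in the statement. Once that identification is made, the Schwarz--Pick and Rouch\'e steps are entirely standard.
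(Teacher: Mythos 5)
Your proof is correct and follows essentially the same route as the paper's (Proposition \ref{prop1.2b} in the appendix): apply Schwarz--Pick to $g(z)=f(z)/z$, translate the pseudohyperbolic containment $\rho_H(g(z),g(0))\le|z|$ into the Euclidean lower bound $|g(z)|\ge(\delta-|z|)/(1-|z|\delta)$, and finish with Rouch\'e. The only cosmetic difference is that you apply Rouch\'e once, directly on the circle $|z|=\eta$ using the non-strict bound $|f|\ge s$ there together with the strict $|w|<s$, whereas the paper works on circles $|z|=\epsilon<\eta$ and lets $\epsilon\to\eta$; both are valid, and yours is slightly more streamlined. (One small remark: your maximum-principle step assumes $\delta<1$; when $\delta=1$ the Schwarz Lemma forces $f$ to be a rotation and the statement is immediate, so this is only a trivial edge case to dispatch separately.)
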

\begin{rem}
  The hypotheses imply $\delta\le1$ by the Schwarz Lemma.  We give a
  Proof in an Appendix, Section \ref{sec5}.
\end{rem}
\begin{lem}\label{lem1.2}
  Given $r>\frac{4\sqrt{2}}{3}$, if $Z_1:D_r\to D_2$ is holomorphic,
  with $Z_1(0)=0$, $Z_1^\prime(0)=1$, then there exists a continuous
  function $\phi:\overline{D}_1\to D_r$ which is holomorphic on $D_1$
  and which satisfies $(Z_1\circ\phi)(z)=z$ for all
  $z\in{\overline{D}}_1$.
\end{lem}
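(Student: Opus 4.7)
The plan is to apply the quantitative univalence statement of Lemma \ref{prop1.2} after a suitable rescaling, producing a holomorphic local inverse of $Z_1$ on a disk containing $\overline{D}_1$. The threshold $r > \frac{4\sqrt{2}}{3}$ should appear as exactly the condition that makes Lemma \ref{prop1.2} yield a value $s$ in the rescaled problem large enough to cover the target.

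First I would introduce the rescaling $F(z) = \frac{1}{2}Z_1(rz)$, so that $F:D_1 \to D_1$ is holomorphic with $F(0)=0$ and $F'(0) = r/2$. Setting $\delta = r/2$, the Schwarz Lemma forces $\delta \le 1$, so in fact $r \le 2$. Lemma \ref{prop1.2} gives, for any $\eta \in (0,\delta)$, a univalent restriction $F:D_\eta \to D_1$ whose image contains $D_s$ with $s = \eta(\delta-\eta)/(1-\eta\delta)$. Undoing the rescaling, $Z_1$ restricted to $D_{r\eta}$ is univalent onto an image containing $D_{2s}$. Thus it suffices to choose $\eta$ so that $2s > 1$.

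The inequality $2s > 1$ simplifies to $2\eta^2 - 3\eta\delta + 1 < 0$, a quadratic in $\eta$ whose discriminant is $9\delta^2 - 8$. Positivity of the discriminant is equivalent to $\delta > \frac{2\sqrt{2}}{3}$, that is, to the hypothesis $r > \frac{4\sqrt{2}}{3}$. Concretely, evaluating at the midpoint $\eta = 3\delta/4$ (which lies in $(0,\delta)$) gives $2\eta^2 - 3\eta\delta + 1 = 1 - \frac{9}{8}\delta^2 < 0$ under this hypothesis. Hence such an $\eta$ exists, and Lemma \ref{prop1.2} delivers a well-defined holomorphic inverse $\phi_0:D_{2s} \to D_{r\eta}$ of $Z_1|_{D_{r\eta}}$, with $2s > 1$.

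Finally I would take $\phi$ to be the restriction of $\phi_0$ to $\overline{D}_1 \subset D_{2s}$. Since $\phi_0$ is holomorphic (hence continuous) on $D_{2s}$ and $\overline{D}_1$ is a compact subset, $\phi$ is continuous on $\overline{D}_1$, holomorphic on $D_1$, satisfies $Z_1 \circ \phi = \mathrm{id}$ on $\overline{D}_1$, and maps into the compact set $\phi_0(\overline{D}_1) \subset D_{r\eta} \subset D_r$. The main obstacle, and the only nontrivial step, is the quadratic optimization in the previous paragraph: verifying that the sharp threshold $r > \frac{4\sqrt{2}}{3}$ coincides precisely with the discriminant condition $9\delta^2 - 8 > 0$. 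Everything else is bookkeeping.
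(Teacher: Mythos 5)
Your argument is correct and follows the paper's own proof almost exactly: the same rescaling $f(z)=\frac12 Z_1(rz)$, the same application of Lemma \ref{prop1.2} with $\delta=r/2$, and the same choice $\eta=3\delta/4=3r/8$ as the midpoint of the roots of the quadratic (the paper writes it as $4\eta^2-3r\eta+2<0$ in the appendix, which is your $2\eta^2-3\eta\delta+1<0$ after substituting $\delta=r/2$), and the same unscaling to produce $\phi$. The only cosmetic slip is calling $F|_{D_\eta}$ ``univalent'' --- Lemma \ref{prop1.2} only guarantees that $F$ hits each value of $D_s$ exactly once on $D_\eta$, not that $F$ is injective on all of $D_\eta$ --- but since you only invert over $D_{2s}\supset\overline{D}_1$, this does not affect the argument.
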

\begin{rem}
  The convenience of the constant $\frac{4\sqrt{2}}{3}\approx1.8856$,
  and the choice of $\eta=3r/8$ in the following Proof, are also
  explained in Section \ref{sec5}.  It follows from the Schwarz Lemma
  that $r\le2$, and it follows from the fact that $\phi$ is an inverse
  of $Z_1$ that $\phi(0)=0$ and $\phi^\prime(0)=1$.
\end{rem}
\begin{proof}
  Define a new holomorphic function $f:D_1\to D_1$ by
  \begin{equation}\label{eq23}
    f(z)=\frac12\cdot Z_1(r\cdot z),
  \end{equation}
  so $f(0)=0$, $f^\prime(0)=\frac r2$, and Lemma \ref{prop1.2} applies
  with $\delta=\frac r2$.  If we choose $\eta=\frac{3r}8$, then
  $s=\frac{3r^2}{64-12r^2}$, and the assumption
  $r>\frac{4\sqrt{2}}{3}$ implies $s>\frac12$.  It follows from Lemma
  \ref{prop1.2} that there exists a function $\psi:D_s\to D_{\eta}$
  such that $(f\circ\psi)(z)=z$ for all $z\in
  {\overline{D}}_{1/2}\subseteq D_s$; this inverse function $\psi$ is
  holomorphic on $D_{1/2}$.  The claimed function
  $\phi:{\overline{D}}_1\to D_{r\eta}\subseteq D_r$ is defined by
  $\phi(z)=r\cdot\psi(\frac12\cdot z)$, so for $z\in{\overline{D}}_1$,
  $$Z_1(\phi(z))=Z_1(r\cdot\psi(\frac12\cdot z))=2\cdot
  f(\psi(\frac12\cdot z))=2\cdot\frac12\cdot z=z.$$
\end{proof}

\pagebreak

\section{$J$-holomorphic disks}\label{sec3}

For $S>0$, consider the bidisk $\Omega_S=D_2\times D_S\subseteq\co^2$,
as an open subset of $\re^4$, with coordinates $\vec
x=(x_1,y_1,x_2,y_2)=(z_1,z_2)$ and the trivial tangent bundle
$T\Omega_S\subseteq T\re^4$.  Consider an almost complex structure $J$
on $\Omega_S$ given by a complex structure operator on $T_{\vec
  x}\Omega_S$ of the following form:
\begin{equation}\label{eq1}
  J(\vec
  x)=\left(\begin{array}{cccc}0&-1&0&0\\1&0&0&0\\0&\lambda&0&-1\\\lambda&0&1&0\end{array}\right),
\end{equation}
where $\lambda:\Omega_S\to\re$ is any function.

A map $Z:D_r\to\Omega_S$ is a $J$-holomorphic disk if
$Z\in\cc^1(D_r)$ and $dZ\circ J_{std}=J\circ dZ$, where $J_{std}$ is
the standard complex structure on $D_r\subseteq\co$.  For $J$ of the
form (\ref{eq1}), if $Z(z)$ is defined by complex valued component
functions,
\begin{equation}\label{eq2}
  Z:D_r\to\Omega_S: Z(z)=(Z_1(z),Z_2(z)),
\end{equation} then the $J$-holomorphic property implies that
$Z_1:D_r\to D_2$ is holomorphic in the standard way.

\begin{example}\label{ex1}
  If the function $\lambda(z_1,z_2)$ satisfies $\lambda(z_1,0)=0$ for
  all $z_1\in D_2$, then the map $Z:D_2\to\Omega_S:Z(z)=(z,0)$ is a
  $J$-holomorphic disk.
\end{example}

\begin{defn}\label{def1}
  The Kobayashi-Royden pseudonorm on $\Omega_S$ is a function
  $T\Omega_S\to\re:(\vec x,\vec v)\mapsto\|(\vec x,\vec v)\|_K$, defined
  on tangent vectors $\vec v\in T_{\vec x}\Omega_S$ to be the number
  $$\glb\left\{\frac1r:\exists\mbox{ a $J$-holomorphic }
  Z:D_r\to\Omega_S,\ Z(0)=\vec x,\ dZ(0)(\frac\partial{\partial x})=\vec
  v\right\}.$$
\end{defn}
Under the assumption that $\lambda\in\cc^{0,\alpha}(\Omega_S)$,
$0<\alpha<1$, it is shown by \cite{ir} and \cite{nw} that there is a
nonempty set of $J$-holomorphic disks through $\vec x$ with tangent
vector $\vec v$ as in the Definition, so the pseudonorm is a
well-defined function.

At this point we pick $\alpha\in(0,1)$ and set
$\lambda(z_1,z_2)=-2|z_2|^\alpha$.  Let $S=S_\alpha>0$ be the constant
defined by formula (\ref{eq21}) from Theorem \ref{thm1.1}.  Then,
$(\Omega_S,J)$ is an almost complex manifold with the following
property:
\begin{thm}\label{thm1}
  If $0\ne b\in D_S$ then $\|(0,b),(1,0)\|_K\ge\frac3{4\sqrt{2}}$.
\end{thm}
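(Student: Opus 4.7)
The plan is to argue by contradiction: suppose the Kobayashi-Royden pseudonorm of $((0,b),(1,0))$ is strictly less than $\frac{3}{4\sqrt{2}}$. Then by Definition \ref{def1}, there is some $r>\frac{4\sqrt2}{3}$ and a $J$-holomorphic disk $Z:D_r\to\Omega_S$ with $Z(0)=(0,b)$ and $dZ(0)(\partial/\partial x)=(1,0)$. Writing $Z=(Z_1,Z_2)$, the first component $Z_1:D_r\to D_2$ is holomorphic (as noted after (\ref{eq2})) with $Z_1(0)=0$ and $Z_1'(0)=1$. This is exactly the setup of Lemma \ref{lem1.2}, which produces a continuous map $\phi:\overline{D}_1\to D_r$, holomorphic on $D_1$, with $Z_1\circ\phi=\mathrm{id}$. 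The idea is then to reparametrize $Z$ by $\phi$ so that the first coordinate becomes $z$ and the second coordinate $h(z):=Z_2(\phi(z))$ becomes the complex-valued function we can feed into Theorem \ref{thm1.1}.

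The next step is to compute the $\bar z$-derivative of $h$. Unpacking the condition $dZ\circ J_{std}=J\circ dZ$ with $J$ as in (\ref{eq1}) yields, after a short calculation with real and imaginary parts, the equation
\begin{equation*}
\frac{\partial Z_2}{\partial\z}=-\frac{\lambda}{2}\,\overline{Z_1'(z)}
\end{equation*}
on $D_r$; with the chosen $\lambda(z_1,z_2)=-2|z_2|^\alpha$ this becomes $\partial Z_2/\partial\z=|Z_2|^\alpha\,\overline{Z_1'(z)}$. Since $\phi$ is holomorphic, the chain rule in $\bar z$ gives
\begin{equation*}
\frac{\partial h}{\partial\z}=\frac{\partial Z_2}{\partial\z}(\phi(z))\cdot\overline{\phi'(z)}=|h(z)|^\alpha\,\overline{Z_1'(\phi(z))\,\phi'(z)},
\end{equation*}
and differentiating the identity $Z_1(\phi(z))=z$ gives $Z_1'(\phi(z))\phi'(z)\equiv 1$, so $\partial h/\partial\z=|h|^\alpha$.

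Finally we check the hypotheses of Theorem \ref{thm1.1}: $h$ is continuous on $\overline{D}_1$, equals $b\ne 0$ at the origin, and on the open set where $h\ne 0$ we have $h\in\cc^1$ (indeed smooth by bootstrapping) and satisfies (\ref{eq24}). The theorem therefore gives $\sup_{\overline{D}_1}|h|>S_\alpha=S$. On the other hand, $Z$ maps into $\Omega_S=D_2\times D_S$ and $\phi$ maps $\overline{D}_1$ into $D_r$, so $|h(z)|<S$ for every $z\in\overline{D}_1$; compactness of $\overline{D}_1$ then gives $\sup_{\overline{D}_1}|h|<S$, contradicting Theorem \ref{thm1.1}.

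The main obstacle is the derivation of the equation $\partial Z_2/\partial\z=-\tfrac{\lambda}{2}\overline{Z_1'}$ from the tensorial $J$-holomorphic condition: it is just linear algebra, but one has to be careful with signs when expanding $J\,dZ=dZ\circ J_{std}$ in terms of the real partials of $X_2,Y_2$, and then recombining into Wirtinger form. Once that PDE is in hand, the passage to the reparametrized function $h$ and the application of the earlier results is essentially bookkeeping.
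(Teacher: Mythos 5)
Your proof is correct and follows the paper's strategy in all essential respects: contradiction from $r>\tfrac{4\sqrt2}{3}$, normal form via Lemma \ref{lem1.2}, and a final contradiction via Theorem \ref{thm1.1}. The only organizational difference is in how the $\bar\partial$-equation for the reparametrized second component is obtained: the paper first forms $Z\circ\phi=(z,f(z))$ and reads off the nonlinear Cauchy--Riemann system (\ref{eq25}) directly from $dZ\circ J_{std}=J\circ dZ$ applied to that normal form (where the factor $\overline{Z_1'}$ never appears because $Z_1\circ\phi=\mathrm{id}$), whereas you derive the more general identity $\partial Z_2/\partial\bar z=-\tfrac{\lambda}{2}\,\overline{Z_1'}$ on $D_r$ and then transfer it through $\phi$ by the chain rule, using $Z_1'(\phi)\phi'\equiv1$ to cancel the extra factor. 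Both computations are correct; the paper's is marginally shorter, while yours makes explicit the structure of the $\bar\partial$-equation before reparametrization, which is a mild generalization worth recording.
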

\begin{rem}
  Since $\frac3{4\sqrt{2}}\approx0.53$, and
  $\|(0,0),(1,0)\|_K\le\frac12$ by Example \ref{ex1}, the Theorem
  shows that the Kobayashi-Royden pseudonorm is not upper
  semicontinuous on $T\Omega_S$.
\end{rem}
\begin{proof}
  Consider a $J$-holomorphic map $Z:D_r\to\Omega_S$ of the form
  (\ref{eq2}), and suppose $Z(0)=(0,b)\in\Omega_S$ and
  $dZ(0)(\frac\partial{\partial x})=(1,0)$.  Then the holomorphic
  function $Z_1:D_r\to D_2$ satisfies $Z_1(0)=0$, $Z_1^\prime(0)=1$,
  and $Z_2\in\cc^1(D_r)$ satisfies $Z_2(0)=b$.

  Suppose, toward a contradiction, that there exists such a map $Z$
  with $b\ne0$ and $r>\frac{4\sqrt{2}}{3}$.  Then Lemma \ref{lem1.2}
  applies to $Z_1$: there is a re-parametrization $\phi$ which puts
  $Z$ into the following normal form: \begin{eqnarray*}
    (Z\circ\phi):{\overline{D}}_1&\to&\Omega_S\\ z&\mapsto&(Z_1(\phi(z)),Z_2(\phi(z)))=(z,f(z)),
 \end{eqnarray*}
 where $f=Z_2\circ\phi:{\overline{D}}_1\to D_S$ satisfies
 $f\in\cc^0({\overline{D}}_1)\cap\cc^1(D_1)$.  From the fact that
 $Z\circ\phi$ is $J$-holomorphic on $D_1$, it follows from the form
 (\ref{eq1}) of $J$ that if $f(z)=u(x,y)+iv(x,y)$, then $f$ satisfies
 this system of nonlinear Cauchy-Riemann equations on $D_1$:
 \begin{equation}\label{eq25}
    \frac{du}{dy}=-\frac{dv}{dx}\ \mbox{ and }
 \ \frac{du}{dx}+\lambda(z,f(z))=\frac{dv}{dy}
  \end{equation}
  with the initial conditions $f(0)=b$, $u_x(0)=u_y(0)=v_x(0)=0$ and
  $v_y(0)=\lambda(0,b)=-2|b|^\alpha$.  The system of equations implies
  \begin{eqnarray}
    \frac{\partial f}{\partial \z}&=&\frac12(\frac{\partial}{\partial x}(u+iv)+i\frac{\partial}{\partial y}(u+iv))\nonumber\\
    &=&\frac12(u_x-v_y+i(v_x+u_y))\nonumber\\
    &=&-\frac12\lambda(z,f(z))=|f|^\alpha.\label{eq9}
  \end{eqnarray}
  So, Theorem \ref{thm1.1} applies, with $f=h$.  The conclusion is that 
  \begin{eqnarray*}
    \displaystyle{\sup_{z\in
   D_1}|f(z)|}&>&S_\alpha,
  \end{eqnarray*}
  but this contradicts $|f(z)|<S=S_\alpha$.
\end{proof}
The previously mentioned existence theory for $J$-holomorphic disks
shows there are interesting solutions of the equation (\ref{eq9}), and
therefore also the inequality (\ref{eq12}).
\begin{example}\label{ex4.4}
  For $0<\alpha<1$, $(\Omega_S,J)$, $\lambda(z_1,z_2)=-2|z_2|^\alpha$
  as above, a map $Z:D_r\to\Omega_S$ of the form $Z(z)=(z,f(z))$ is
  $J$-holomorphic if $f\in\cc^1(D_r)$ and $f(x,y)=u(x,y)+iv(x,y)$ is a
  solution of (\ref{eq25}).  Again generalizing the $\alpha=\frac12$
  case of \cite{ipr}, examples of such solutions can be constructed
  (for small $r$) by assuming $v\equiv0$ and $u$ depends only on $x$,
  so (\ref{eq25}) becomes the ODE $u^\prime(x)-2|u(x)|^\alpha=0$.
  This is the equation from Example 2.5; we can conclude that
  $J$-holomorphic disks in $\Omega_S$ do not have a unique
  continuation property.
\end{example}

\section{Another differential inequality}\label{sec1b}

Here we consider another differential inequality, motivated by
\cite{ipr} and (\ref{eq12}).  The results of this Section do not play
a part in the construction in Section \ref{sec3}.

Unlike Lemma \ref{lem1.2c}, one of the hypotheses of the next Theorem
is that $u$ is strictly positive on the ball in $\re^n$.
\begin{notation}\label{not1}
  Let $\beta_n(r)=\int_{D_r}dV$ denote the volume of the ball
  $D_r\subseteq\re^n$.  Let $\sigma_n(r)=\int_{\partial D_r}dA$ denote the
  $(n-1)$-dimensional surface measure of the ball's boundary, $\partial D_r$.
  Define $$\kappa_n=\frac{\int_0^1\beta_n(r)dr}{\sigma_n(1)},$$ so
  $\kappa_1=\frac12$, $\kappa_2=\frac16$, $\kappa_3=\frac1{12}$,
  \ldots.
\end{notation}
\begin{thm}\label{lem0.3}
  Given constants $B>0$, $C<1$ and $\varepsilon\le C$,
  let $$M=\left((1-C)B\kappa_n\right)^{\frac1{1-\varepsilon}}>0.$$
  Suppose the function $u:D_1\to\re$
    satisfies:
  \begin{itemize}
    \item $u\in\cc^2(D_1)$,
    \item $u(\vec x)>0$ for $\vec x\in D_1$,
    \item for all $\vec x\in D_1$,
      \begin{eqnarray}
        u(\vec x)\Delta u(\vec x)&\ge&B|u(\vec x)|^{1+\varepsilon}+C|\vec\nabla
        u(\vec x)|^2.\label{eq03}
      \end{eqnarray}
  \end{itemize}
  Then, $\displaystyle{\sup_{\vec x\in D_1}u(\vec x)>M}$.
\end{thm}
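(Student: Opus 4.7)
The plan is to linearize the inequality (\ref{eq03}) via the substitution $w=u^{1-C}$. Because $u>0$, $u\in\cc^2(D_1)$, and $1-C>0$, the function $w$ is $\cc^2$ and strictly positive on $D_1$. A direct computation gives
\[\Delta w=(1-C)u^{-C-1}\bigl(u\Delta u-C|\vec\nabla u|^2\bigr),\]
and the exponent $1-C$ is chosen precisely so that the $|\vec\nabla u|^2$ terms cancel when the hypothesis is inserted, leaving
\[\Delta w\ge (1-C)Bu^{\varepsilon-C}.\]

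Next, set $U=\sup_{D_1}u$. If $U=+\infty$ there is nothing to prove, so assume $U<\infty$. Because $\varepsilon\le C$, the exponent $\varepsilon-C$ is nonpositive, and $0<u\le U$ then forces $u^{\varepsilon-C}\ge U^{\varepsilon-C}$. Hence $w$ satisfies the much simpler inequality $\Delta w\ge K$ on all of $D_1$, with the constant $K:=(1-C)BU^{\varepsilon-C}>0$. Since $\sup w=U^{1-C}$, the theorem reduces to the strict mean-value bound $\sup w>K\kappa_n$: rearranging it gives $U^{1-\varepsilon}>(1-C)B\kappa_n=M^{1-\varepsilon}$, which is exactly $U>M$.

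For the bound $\sup w>K\kappa_n$, I would pass to the spherical average $\bar w(r)$ of $w$ on $\partial D_r$. The divergence theorem yields $\sigma_n(r)\bar w'(r)=\int_{D_r}\Delta w\,dV\ge K\beta_n(r)$. Integrating in $r$ from $0$ to $1$ and then integrating by parts produces, for $n\ge 2$,
\[\sigma_n(1)\bar w(1)\ge K\kappa_n\sigma_n(1)+\int_0^1\sigma_n'(r)\bar w(r)\,dr,\]
and the analogous identity $\bar w(1)\ge w(0)+K/2$ when $n=1$ (where $\sigma_1$ is constant but $\sigma_1(0)\bar w(0)=2w(0)$ contributes a boundary term). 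In either case the dropped correction is strictly positive, using $\sigma_n'>0$ on $(0,1)$ together with $\bar w>0$ when $n\ge 2$, or $w(0)>0$ when $n=1$; therefore $\sup w\ge\bar w(1)>K\kappa_n$.

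The principal obstacle is settling on the correct substitution exponent: one needs $1-C$ not only to eliminate the cross term $C|\vec\nabla u|^2$ but also to ensure that the hypothesis $C<1$ makes both the substitution legitimate and the constant $M^{1-\varepsilon}=(1-C)B\kappa_n$ positive. Once this is arranged, extracting the dimensional constant $\kappa_n$ is a routine spherical-means calculation, and the strict positivity of $u$ automatically upgrades the inequality from $\ge$ to $>$.
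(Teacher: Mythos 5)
Your proof is correct and is essentially the paper's own argument in lightly different clothing: the substitution $w=u^{1-C}$ satisfies $\Delta w=(1-C)\,\mathrm{div}(u^{-C}\vec\nabla u)$, so your computation of $\Delta w\ge(1-C)Bu^{\varepsilon-C}$ is exactly the paper's step ``$\mathrm{div}(u^{-C}\vec\nabla u)\ge Bu^{\varepsilon-C}$,'' and your spherical-means integration (integrating $\sigma_n(r)\bar w'(r)=\int_{D_r}\Delta w\,dV$) is the same double integration over $D_{r_1}$ and $r_1\in[0,r_2]$ that the paper performs via the Divergence Theorem. The only point you gloss over is that $u$ is only assumed $\cc^2$ on the open ball, so $\bar w(1)$ is not directly defined; the paper handles this by stopping at $r_2<1$, keeping the positive term $(u(\vec 0))^{1-C}$, and letting $r_2\to1^-$ to get the strict inequality $\sup u>M$ --- you should phrase your final step the same way (take $\sup_{r_2<1}$ of the bound $\bar w(r_2)\ge K\,\tfrac{\int_0^{r_2}\beta_n}{\sigma_n(r_2)}+\tfrac{1}{\sigma_n(r_2)}\int_0^{r_2}\sigma_n'\bar w$, noting the second term stays bounded below by a positive constant). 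With that small repair the argument is complete and matches the paper's.
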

\begin{proof}
  Using the assumption that $u(\vec x)>0$ and the identity
  \begin{eqnarray*}
    div(u^{-C}\vec\nabla u)&=&(u^{-C}u_{x_1})_{x_1}+\cdots+(u^{-C}u_{x_n})_{x_n}\\
    &=&u^{-C}\Delta u-Cu^{-C-1}|\vec\nabla u|^2,
  \end{eqnarray*}
  multiplying both sides of (\ref{eq03}) by $u^{-C-1}$ gives:
  \begin{eqnarray*}
    u^{-C}\Delta u&\ge&Bu^{\varepsilon-C}+C|\vec\nabla u|^2u^{-1-C}\\
    \implies div(u^{-C}\vec\nabla u)&\ge&Bu^{\varepsilon-C}.
  \end{eqnarray*}
  For $0\le r_1<1$, integrating over $D_{r_1}$,
  \begin{equation}\label{eq16}
    \int_{D_{r_1}}div(u^{-C}\vec\nabla u)dV\ge\int_{D_{r_1}}Bu^{\varepsilon-C}dV.
  \end{equation}
  We evaluate the LHS using a re-scaling, the Divergence Theorem, and the unit normal vector field $\vec\nu$ on the unit sphere $\partial D_1$.
  \begin{eqnarray}
    \int_{D_{r_1}}div((u(\vec x))^{-C}\vec\nabla u(\vec x))dV&=&\int_{D_1}div((u(r_1\vec x))^{-C}\vec\nabla u(r_1\vec x))r_1^ndV\nonumber\\
    &=&r_1^n\int_{\partial D_1}(u(r_1\vec x))^{-C}(\vec\nabla u(r_1\vec x))\cdot\vec\nu dA.\label{eq17}
  \end{eqnarray}
  For spherical coordinates $(r,\theta_1,\ldots,\theta_{n-1})=(r,\theta)$ on $\re^n$, (\ref{eq17}) can be written as
  $$r_1^n\int_{\partial D_1}(u(r_1,\theta))^{-C}(\vec\nabla
  u(r_1,\theta))\cdot\vec\nu
  dA=r_1^n\int_{\partial D_1}(u(r_1,\theta))^{-C}\frac{\partial u}{\partial
    r}(r_1,\theta)dA.$$ For $0\le r_1<r_2<1$, integrating
  $\int_{r_1=0}^{r_1=r_2}dr_1$ both sides of (\ref{eq16}) gives:
  \begin{eqnarray}
    0&\le&\int_{0}^{r_2}\int_{D_{r_1}}Bu^{\varepsilon-C}dVdr_1\nonumber\\
    &\le&\int_{0}^{r_2}r_1^n\int_{\partial D_1}(u(r_1,\theta))^{-C}\frac{\partial
    u}{\partial
    r}(r_1,\theta)dAdr_1\nonumber\\
    &\le&\int_{0}^{r_2}r_2^n\int_{\partial D_1}(u(r_1,\theta))^{-C}\frac{\partial
    u}{\partial
    r}(r_1,\theta)dAdr_1.\label{eq18}
  \end{eqnarray}
  The RHS can be re-arranged and estimated:
  \begin{eqnarray*}
    &&\int_{\partial
    D_1}\left(\int_{r_1=0}^{r_2}r_2^n(u(r_1,\theta))^{-C}\frac{\partial
    u}{\partial r}(r_1,\theta)dr_1\right)dA\\ &=&r_2\int_{\partial
    D_1}\left(\frac1{-C+1}(u(r_2,\theta))^{-C+1}-\frac1{-C+1}(u(\vec0))^{-C+1}\right)r_2^{n-1}dA\\
    &=&\frac{r_2}{1-C}\int_{\partial D_{r_2}}\left((u(\vec
    x))^{1-C}-(u(\vec0))^{1-C}\right)dA\\
    &\le&\frac{r_2}{1-C}\left(\left(\sup_{\vec x\in \partial
    D_{r_2}}(u(\vec
    x))^{1-C}\right)-(u(\vec0))^{1-C}\right)\sigma(r_2).
  \end{eqnarray*}
  Using $1-C>0$, the inequality (\ref{eq18}) implies:
  $$\sup_{\vec x\in \partial D_{r_2}}(u(\vec
  x))^{1-C}\ge(u(\vec0))^{1-C}+\frac{1-C}{r_2\sigma(r_2)}\int_{r_1=0}^{r_2}\int_{D_{r_1}}B(u(\vec
  x))^{\varepsilon-C}dVdr_1.$$ Suppose, toward a contradiction, that
  $\displaystyle{\sup_{\vec x\in D_1}u\le M}$.  Then, since
  $\varepsilon\le C$, $(u(\vec x))^{\varepsilon-C}\ge
  M^{\varepsilon-C}$ and
  \begin{eqnarray}
    \sup_{\vec x\in \partial D_{r_2}}(u(\vec
    x))^{1-C}&\ge&(u(\vec0))^{1-C}+\frac{1-C}{r_2\sigma(r_2)}\int_{0}^{r_2}\int_{D_{r_1}}BM^{\varepsilon-C}dVdr_1\nonumber\\ &=&(u(\vec0))^{1-C}+\frac{1-C}{r_2\sigma(r_2)}BM^{\varepsilon-C}\int_0^{r_2}\beta_n(r_1)dr_1.\label{eq19}
  \end{eqnarray}
  Since (\ref{eq19}) holds for all $r_2\in(0,1)$, 
  \begin{eqnarray*}
    \sup_{\vec x\in D_1}u(\vec x)&\ge&\lim_{r_2\to1^-}\sup_{\vec x\in
      \partial D_{r_2}}u(\vec
    x)\\ &\ge&\lim_{r_2\to1^-}\left((u(\vec0))^{1-C}+\frac{1-C}{r_2}BM^{\varepsilon-C}\frac{\int_0^{r_2}\beta_n(r_1)dr_1}{\sigma_n(r_2)}\right)^{\frac1{1-C}}\\ &>&\left((1-C)BM^{\varepsilon-C}\kappa_n\right)^{\frac1{1-C}}.
  \end{eqnarray*}
  The last quantity is exactly $M$ by construction, a contradiction.
\end{proof}
Theorem \ref{lem0.3} can be applied to the inequality (\ref{eq12}),
where the condition $\varepsilon\le C$ becomes
$1-\frac2\gamma\le\frac1{2\gamma}\left(2(\gamma-1)-\frac\alpha{1-\alpha}\right)$.
However, for $\alpha\in(0,1)$ and $\gamma>0$, the condition is
equivalent to $\alpha\le\frac23$.
\begin{cor}\label{cor3.2}
  Given constants $B>0$, $C<1$, $\varepsilon\le C$, and $M$ as in Theorem \ref{lem0.3}, suppose the function $u:D_1\to\re$
    satisfies:
  \begin{itemize}
    \item $u\in\cc^2(D_1)$,
    \item for all $\vec x\in D_1$, $u(\vec x)\Delta
      u(\vec x)\ge B|u(\vec x)|^{1+\varepsilon}+C|\vec\nabla u(\vec x)|^2$.
  \end{itemize}
  If $\displaystyle{\sup_{\vec x\in D_1}u(\vec x)\le M}$, then there is
  some $\vec x\in D_1$ with $u(\vec x)\le0$.  \boxx
\end{cor}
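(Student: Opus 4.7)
The plan is to prove the Corollary as an immediate contrapositive of Theorem \ref{lem0.3}. The Corollary differs from the Theorem only by removing the strict positivity hypothesis $u(\vec x) > 0$ on $D_1$ and in exchange asserting that if the supremum bound $\sup u \le M$ holds, then strict positivity must fail somewhere.

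Concretely, I would argue by contradiction: suppose that for every $\vec x \in D_1$, $u(\vec x) > 0$. Then all three bullet hypotheses of Theorem \ref{lem0.3} are satisfied verbatim ($u \in \cc^2(D_1)$ and the pointwise differential inequality are given directly in the Corollary's hypotheses, and the strict positivity is exactly the assumption just made). The Theorem therefore applies and yields
\[
  \sup_{\vec x \in D_1} u(\vec x) > M,
\]
which directly contradicts the Corollary's standing assumption $\sup_{\vec x \in D_1} u(\vec x) \le M$. Hence the supposition fails and there must exist some $\vec x \in D_1$ with $u(\vec x) \le 0$.

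There is no real obstacle here: the entire content of the Corollary is packaging Theorem \ref{lem0.3} in contrapositive form, trading the hypothesis ``$u$ strictly positive'' against the conclusion ``$\sup u > M$.'' The only thing to check carefully is that the constant $M$ is literally the same in both statements (which the Corollary stipulates explicitly) and that the Theorem's hypotheses are genuinely a subset of what we can deduce from the Corollary's hypotheses together with the assumed strict positivity — which they are, without any additional work.
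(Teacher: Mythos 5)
Your proof is correct and matches the paper's intent: the corollary is stated with a QED box and no written proof, indicating the author regards it as an immediate contrapositive of Theorem~\ref{lem0.3}, which is exactly the argument you give. Nothing more is needed.
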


Functions satisfying a differential inequality of the form
(\ref{eq15}) or (\ref{eq04}) also satisfy a Strong Maximum Principle;
the only condition is $B>0$.

\begin{thm}\label{thm3.3}
  Given any open set $\Omega\subseteq\re^n$, and any constants $B>0$,
  $C,\varepsilon\in\re$, suppose the function $u:\Omega\to\re$ satisfies:
  \begin{itemize}
    \item $u$ is continuous on $\Omega$,
    \item on the set $\omega=\{\vec x\in\Omega:u(\vec x)>0\}$,
      $u\in\cc^2(\omega)$,
    \item on the set $\omega$, $u$ satisfies $$u\Delta u-B|u|^{1+\varepsilon}-C|\vec\nabla
  u|^2\ge0.$$
  \end{itemize}
  If $u(\vec x_0)>0$ for some $\vec x_0\in\Omega$, then $u$ does not
  attain a maximum value on $\Omega$.
\end{thm}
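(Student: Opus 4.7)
The plan is to argue by contradiction using the classical fact that at an interior maximum of a $\mathcal{C}^2$ function, the gradient vanishes and the Laplacian is nonpositive. The whole point is that the strict positivity of $B$, combined with the fact that the $C|\vec\nabla u|^2$ term drops out at a critical point, makes the sign information on $C$ and $\varepsilon$ irrelevant.

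Concretely, suppose toward a contradiction that $u$ attains a maximum value at some point $\vec x_1\in\Omega$. Since by hypothesis $u(\vec x_0)>0$, the maximum value satisfies $u(\vec x_1)\ge u(\vec x_0)>0$, so $\vec x_1$ lies in the open set $\omega=\{u>0\}$ where $u\in\mathcal{C}^2$. At this interior maximum we have $\vec\nabla u(\vec x_1)=\vec0$ and $\Delta u(\vec x_1)\le0$. Substituting into the differential inequality,
$$0\ge u(\vec x_1)\Delta u(\vec x_1)\ge B(u(\vec x_1))^{1+\varepsilon}+C\cdot 0=B(u(\vec x_1))^{1+\varepsilon}>0,$$
a contradiction. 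Note that the sign of $C$ and the value of $\varepsilon$ play no role, exactly as the statement advertises: only $B>0$ and $u(\vec x_1)>0$ are needed to force the right-hand side to be strictly positive.

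The only ``subtlety'' worth mentioning is the need to rule out a maximum occurring on the boundary of $\omega$ (where $u$ is merely continuous and not assumed $\mathcal{C}^2$); this is handled at the outset by the observation that the maximum value is bounded below by $u(\vec x_0)>0$, which forces the maximizer into $\omega$. There is no real obstacle here, and no Hopf lemma or comparison principle is required — the strict lower bound $B(u(\vec x_1))^{1+\varepsilon}>0$ does all the work by itself.
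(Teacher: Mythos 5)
Your proof is correct, and it is genuinely simpler than the one in the paper. The paper invokes the Strong Maximum Principle (Gilbarg--Trudinger Theorem 3.5): it rewrites the quasilinear inequality as a linear uniformly elliptic one $\Delta u + (-B u^{\varepsilon-1})u + (-C\vec\nabla u/u)\cdot\vec\nabla u \ge 0$ on the connected component of $\omega$ containing the maximizer, observes that the zeroth-order coefficient is negative there, concludes from the Strong Maximum Principle that $u$ is constant on that component, and then derives a contradiction from the fact that $u\equiv0$ is the only constant solution. You replace all of this machinery with the elementary second-derivative test: at an interior maximizer $\vec x_1$, one has $\vec\nabla u(\vec x_1)=\vec 0$ and $\Delta u(\vec x_1)\le0$, so the left-hand side $u(\vec x_1)\Delta u(\vec x_1)$ is nonpositive while the right-hand side $B(u(\vec x_1))^{1+\varepsilon}$ is strictly positive, contradiction. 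Your handling of the boundary-of-$\omega$ issue (the maximizer is forced into the open set $\omega$ because the max value is at least $u(\vec x_0)>0$) mirrors the paper's and is correct. The only thing the paper's heavier argument buys is the intermediate structural fact that a solution attaining an interior maximum would have to be locally constant; for the stated conclusion, your calculus-level argument is sufficient and cleaner.
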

\begin{proof}
  Note that the constant function $u\equiv0$ is the only locally
  constant solution of the inequality for $B>0$.  If $B=0$ then some
  other constant functions would also be solutions.

  Given a function $u$ satisfying the hypotheses, $\omega$ is a
  nonempty open subset of $\Omega$.  Suppose, toward a contradiction,
  that there is some $\vec x_1\in\Omega$ with $u(\vec x)\le u(\vec
  x_1)$ for all $x\in\Omega$.  In particular, $u(\vec x_1)\ge u(\vec
  x_0)>0$, so $\vec x_1\in\omega$.  Let $\omega_1$ be the connected
  component of $\omega$ containing $\vec x_1$.

  For $\vec x\in\omega_1$, $u$ satisfies the linear, uniformly
  elliptic inequality $$\Delta u(\vec x)+(-B(u(\vec
  x))^{\varepsilon-1})u(\vec x)+(-C\frac{\vec\nabla u(\vec x)}{u(\vec
  x)})\cdot\vec\nabla u(\vec x)\ge0,$$ where the coefficients (defined
  in terms of the given $u$) are locally bounded functions of $\vec
  x$, and $(-B(u(\vec x))^{\varepsilon-1})$ is negative for all $\vec
  x\in\omega$.  It follows from the Strong Maximum Principle
  (\cite{gt} Theorem 3.5) that since $u$ attains a maximum value at
  $\vec x_1$, then $u$ is constant on $\omega_1$.  Since the only
  constant solution is $0$, it follows that $u(\vec x_1)=0$, a
  contradiction.
\end{proof}

In the $n=1$ case, we can get a result similar to Theorem
\ref{lem0.3}, but taking advantage of more information on initial
conditions.  We are interested in the ordinary differential inequality
$$uu^{\prime\prime}-B|u|^{1+\varepsilon}-C(u^\prime)^2\ge0.$$ By
assuming $u(0)>0$ and $u^\prime(0)\ge0$, we will show that the lower
bound $M$ for the supremum on $D_1=(-1,1)$ is exceeded on the
right-side subinterval $[0,1)$.  Analogously, if $u^\prime(0)\le0$,
  then $\sup u>M$ on the left-side interval $(-1,0]$.  The first Lemma
is a technical step for nonvanishing.

\begin{lem}\label{lem0.1}
  Given constants $p\in\re$, $B>0$, $C\ge-1$, suppose the function
  $u:(-1,1)\to\re$ satisfies:
  \begin{itemize}
    \item $u\in\cc^2([0,1))$,
    \item for $0<t<1$,
      \begin{equation}\label{eq02}
        u(t)u^{\prime\prime}(t)\ge B|u(t)|^p+C(u^\prime(t))^2.
      \end{equation}
  \end{itemize}
  If $u(0)>0$ and $u^\prime(0)\ge0$, then $u(t)>0$ on $[0,1)$.
\end{lem}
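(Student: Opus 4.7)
The plan is to argue by contradiction using an integrating factor that converts the inequality (\ref{eq02}) into a monotonicity statement. First, I would suppose toward a contradiction that $u$ vanishes somewhere on $[0,1)$ and set $t_0 = \inf\{t \in [0,1) : u(t) \le 0\}$. By continuity and $u(0) > 0$, this gives $t_0 > 0$, $u(t_0) = 0$, and $u(t) > 0$ throughout $[0,t_0)$, so all negative real powers of $u$ are well-defined on $[0,t_0)$.

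Next, I would observe that $u^{-C}$ serves as an integrating factor for $u'$. A direct computation on $[0,t_0)$ yields
$$\bigl(u^{-C}\,u'\bigr)' \;=\; u^{-C}u'' - C\,u^{-C-1}(u')^2 \;=\; u^{-C-1}\bigl(u\,u'' - C(u')^2\bigr) \;\ge\; B\,u^{\,p-C-1} \;\ge\; 0,$$
where the first inequality uses (\ref{eq02}) and the second uses $B>0$ and $u>0$. Hence $u^{-C}u'$ is nondecreasing on $[0,t_0)$. At $t=0$ we have $u^{-C}(0)\,u'(0) \ge 0$, since $u(0)>0$ makes $u^{-C}(0)>0$ and $u'(0)\ge 0$ by hypothesis, so $u^{-C}(t)\,u'(t)\ge 0$ throughout $[0,t_0)$. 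Because $u^{-C}(t)>0$, this forces $u'(t)\ge 0$ on $[0,t_0)$.

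Finally, $u$ being nondecreasing on $[0,t_0)$ yields $u(t)\ge u(0)>0$ for all $t\in[0,t_0)$; letting $t\to t_0^-$ and using continuity gives $u(t_0)\ge u(0)>0$, contradicting $u(t_0)=0$. Therefore $u(t)>0$ for all $t\in[0,1)$.

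The computation is elementary and the main conceptual obstacle is simply recognizing the correct integrating factor: the structural feature of (\ref{eq02}) is that its left side matches $u^{C+1}\bigl(u^{-C}u'\bigr)'$, so that the ``$C(u')^2$'' term is exactly absorbed. I also note that the stated restriction $C\ge -1$ does not appear to be used in this argument (the factor $u^{-C}$ is well-defined and positive for any real $C$ whenever $u>0$); presumably that bound is included for compatibility with the hypotheses of the theorem in which Lemma \ref{lem0.1} is subsequently applied.
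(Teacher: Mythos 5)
Your proof is correct, and it takes a genuinely different route from the paper's. The paper's argument is a single integration: it applies $\int_0^x\,dt$ to both sides of (\ref{eq02}), integrates $uu''$ by parts, evaluates at a point $x$ where $u(x)=0$, and arrives at $0\ge u(0)u'(0)+B\int_0^x|u|^p\,dt+(1+C)\int_0^x(u')^2\,dt$. The hypothesis $C\ge-1$ is then used precisely so that the coefficient $1+C$ on the last integral is nonnegative, making the right-hand side strictly positive and yielding the contradiction. Your approach instead uses the multiplicative integrating factor $u^{-C}$, which eliminates the $C(u')^2$ term exactly before any sign considerations arise, and then extracts monotonicity of $u^{-C}u'$ and hence of $u$ itself. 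As you observe, this dispenses with the hypothesis $C\ge-1$ entirely, so your argument actually proves a slightly stronger statement (valid for all $C\in\re$). It is worth noting that the factor $u^{-C-1}$ you multiply by is exactly the one the paper applies in the subsequent proof of Theorem \ref{lem0.2}; so your proof of the Lemma anticipates the technique the paper reserves for the Theorem, whereas the paper keeps the Lemma's proof at the cruder, purely integral level at the cost of the extra hypothesis. Both are elementary and correct; yours unifies the method across the Lemma and the Theorem and removes a hypothesis, while the paper's is marginally shorter for the Lemma taken in isolation.
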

\begin{proof}
  Suppose, toward a contradiction, that $u$ attains some nonpositive
  value, so by continuity, there is some $x\in(0,1)$ with $u(x)=0$.
  Applying $\int_0^xdt$ to both sides of (\ref{eq02}), and integrating
  $uu^{\prime\prime}$ by parts gives:
  $$u(x)u^\prime(x)-u(0)u^\prime(0)-\int_0^x(u^\prime(t))^2dt\ge
  B\int_0^x|u(t)|^pdt+C\int_0^x(u^\prime(t))^2dt$$
  $$\implies 0\ge
  u(0)u^\prime(0)+B\int_0^x|u(t)|^pdt+(C+1)\int_0^x(u^\prime(t))^2dt.$$
  However, the first and third terms on RHS are $\ge0$ and the middle
  term is positive.
\end{proof}

In the following Theorem, the constant $M$ is the same as the bound
from Theorem \ref{lem0.3} for $n=1$.  The condition $-1\le C$ did not
appear in Theorem \ref{lem0.3}, it comes from Lemma \ref{lem0.1}.  The
condition $\varepsilon\le C$ means that the Theorem does not apply to
Example \ref{ex2.2}, where $C=0\le\varepsilon$, except for
$\varepsilon=0$.
\begin{thm}\label{lem0.2}
  Given constants $B>0$, $-1\le C<1$ and $\varepsilon\le C$,
  let $$M=\left(\frac12(1-C)B\right)^{\frac1{1-\varepsilon}}>0.$$
  Suppose the function $u:(-1,1)\to\re$ satisfies:
  \begin{itemize}
    \item $u\in\cc^2([0,1))$,
    \item for $0<t<1$,
      \begin{equation}\label{eq22}
        u(t)u^{\prime\prime}(t)\ge B|u(t)|^{1+\varepsilon}+C(u^\prime(t))^2.
      \end{equation}
  \end{itemize}
  If $u(0)>0$ and $u^\prime(0)\ge0$, then $\displaystyle{\sup_{0\le y<1}u(y)>M}$.
\end{thm}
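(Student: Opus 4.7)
The plan is to follow the same multiplication-by-$u^{-1-C}$ trick that worked in Theorem \ref{lem0.3}, but now in one dimension we can integrate directly along $[0,t]$ without invoking the Divergence Theorem, and we can exploit the initial conditions $u(0)>0$, $u'(0)\ge 0$ to obtain a clean strict inequality at the end.

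First I would invoke Lemma \ref{lem0.1}, whose hypotheses are exactly satisfied here (with $p=1+\varepsilon$ and $C\ge -1$), to conclude that $u(t)>0$ on all of $[0,1)$. This is what makes it legitimate to divide by powers of $u$ in what follows. Then, multiplying the hypothesis (\ref{eq22}) by $u^{-1-C}$ and using the identity
\begin{equation*}
\bigl(u^{-C}u'\bigr)' = u^{-C}u'' - Cu^{-C-1}(u')^{2},
\end{equation*}
I would rewrite the differential inequality as
\begin{equation*}
\bigl(u^{-C}u'(t)\bigr)' \ge B\,u(t)^{\varepsilon-C}, \qquad 0<t<1.
\end{equation*}

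Next, suppose toward a contradiction that $\sup_{0\le y<1} u(y)\le M$. Since $\varepsilon-C\le 0$ and $u>0$, the bound $u\le M$ gives $u^{\varepsilon-C}\ge M^{\varepsilon-C}$, hence $(u^{-C}u')' \ge BM^{\varepsilon-C}$ on $(0,1)$. Integrating once from $0$ to $t$ and using $u'(0)\ge 0$, $u(0)>0$:
\begin{equation*}
u(t)^{-C}u'(t) \ge u(0)^{-C}u'(0) + BM^{\varepsilon-C}t \ge BM^{\varepsilon-C}t.
\end{equation*}
Rewriting the left side as $\frac{1}{1-C}\bigl(u^{1-C}\bigr)'(t)$ (valid since $C<1$) and integrating a second time from $0$ to $t$ yields
\begin{equation*}
u(t)^{1-C} \ge u(0)^{1-C} + \tfrac{1}{2}(1-C)B\,M^{\varepsilon-C}\,t^{2}.
\end{equation*}

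Finally, let $t\to 1^{-}$. Since $u(0)^{1-C}>0$ strictly, the assumption $\sup u\le M$ (so $\sup u^{1-C}\le M^{1-C}$ because $1-C>0$) forces
\begin{equation*}
M^{1-C} > \tfrac{1}{2}(1-C)B\,M^{\varepsilon-C},
\end{equation*}
i.e.\ $M^{1-\varepsilon} > \tfrac{1}{2}(1-C)B$, which directly contradicts the definition $M=\bigl(\tfrac{1}{2}(1-C)B\bigr)^{1/(1-\varepsilon)}$. The main subtlety is bookkeeping: one must pass from a non-strict integral inequality to a strict supremum estimate, and the $u(0)^{1-C}>0$ term is exactly what upgrades $\ge M$ to $>M$. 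No further obstacle is expected, since positivity from Lemma \ref{lem0.1} eliminates the need to worry about the zero set of $u$.
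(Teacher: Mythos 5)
Your proposal is correct and follows essentially the same route as the paper: invoke Lemma \ref{lem0.1} for positivity, multiply by $u^{-1-C}$ to package the left side as $(u^{-C}u')'$ (the paper does the equivalent via integration by parts), integrate twice using $u'(0)\ge 0$, and derive the contradiction $M^{1-\varepsilon}>\tfrac12(1-C)B$ with the strictness coming from $u(0)^{1-C}>0$. The only cosmetic difference is that you introduce the contradiction hypothesis before the first integration, whereas the paper integrates symbolically and only substitutes the bound $u^{\varepsilon-C}\ge M^{\varepsilon-C}$ at the end; the content is identical.
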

\begin{proof}
  Lemma \ref{lem0.1} applies, so $u(t)>0$ on $[0,1)$, and we can
    multiply both sides of (\ref{eq22}) by $(u(t))^{-C-1}$ to get:
  $$(u(t))^{-C}u^{\prime\prime}(t)\ge
    B(u(t))^{\varepsilon-C}+C(u(t))^{-C-1}(u^\prime(t))^2.$$ Applying
    $\int_0^xdt$ to both sides, and integrating LHS by parts gives:
  \begin{eqnarray*}
    &&(u(x))^{-C}u^\prime(x)-(u(0))^{-C}u^\prime(0)-\int_0^x(-C)(u(t))^{-C-1}(u^\prime(t))^2dt\\
    &\ge&B\int_0^x(u(t))^{\varepsilon-C}dt+C\int_0^x(u(t))^{-C-1}(u^\prime(t))^2dt.
  \end{eqnarray*}
  Two integrals cancel exactly, and we can neglect the nonnegative
  constant term $(u(0))^{-C}u^\prime(0)$, to conclude:
  $$(u(x))^{-C}u^\prime(x)\ge B\int_0^x(u(t))^{\varepsilon-C}dt,$$ for
  all $x\in(0,1)$.  Then, applying $\int_0^ydx$ to both sides gives:
  \begin{equation*}
    \frac1{1-C}\left((u(y))^{1-C}-(u(0))^{1-C}\right)\ge
  B\int_{x=0}^{x=y}\left(\int_{t=0}^{t=x}(u(t))^{\varepsilon-C}dt\right)dx.
  \end{equation*}
  Suppose, toward a contradiction, that $\displaystyle{\sup_{0\le
      y<1}u(y)\le M}$, so $0<u(t)\le M$ and, since $\varepsilon-C\le0$,
  $(u(t))^{\varepsilon-C}\ge M^{\varepsilon-C}$.  It follows that
  $$B\int_{0}^{y}\int_{0}^{x}(u(t))^{\varepsilon-C}dtdx\ge
  B\int_{0}^{y}\int_{0}^{x}M^{\varepsilon-C}dtdx=BM^{\varepsilon-C}\frac12y^2.$$
  Since the constant term $u(0)$ is positive, and $\sup y^2=1$,
  \begin{eqnarray*}
    u(y)&\ge&\left((u(0))^{1-C}+(1-C)BM^{\varepsilon-C}\frac12y^2\right)^{\frac1{1-C}}\\
    \implies\sup_{0\le y<1}u(y)&\ge&\sup_{0\le y<1}\left((u(0))^{1-C}+(1-C)BM^{\varepsilon-C}\frac12y^2\right)^{\frac1{1-C}}\\
    &>&\left((1-C)BM^{\varepsilon-C}\frac12\right)^{\frac1{1-C}}.
  \end{eqnarray*}
  The last quantity is exactly $M$ by construction, a contradiction.
\end{proof}

\section{Appendix: Solution of Garnett's exercise}\label{sec5}

We will be using both the Euclidean distance $|z-w|$ in $\co$ and the
pseudohyperbolic distance $\rho_H(z,w)=\left|\frac{z-w}{1-\w
  z}\right|$ for $|z|$, $|w|<1$.  We follow the notation of
(\cite{garnett} \S I.1) for the disks:
\begin{notation}
  For $r>0$ and $z_0\in\co$, let $D(z_0,r)$ denote the Euclidean disk
  with center $z_0$ and radius $r$, so $D(0,r)=D_r$ is the special case with
  $z_0=0$.  For $0<r<1$ and $z_0\in D_1$,
  denote $$K(z_0,r)=\{z\in D_1:\rho_H(z,z_0)<r\}.$$
\end{notation}
Every non-Euclidean disk is also a Euclidean disk:
\begin{equation}\label{eq0}
  K(z_0,r)=D(\frac{1-r^2}{1-r^2|z_0|^2}z_0,r\frac{1-|z_0|^2}{1-r^2|z_0|^2}),
\end{equation}
and in particular, $K(0,r)=D_r$.  We also recall that conformal
automorphisms $\tau$ of $D_1$ are of the form
$\tau(z)=e^{i\theta}\frac{z-z_0}{1-\z_0z}$, where $\tau(z_0)=0$, and
such maps are isometries with respect to $\rho_H$.  More generally, for
any holomorphic map $f:D_1\to D_1$, $\rho_H(f(z),f(w))\le\rho_H(z,w)$.

\begin{prop}\label{prop1.2b}
  Let $f:D_1\to D_1$ be holomorphic, with $f(0)=0$,
  $|f^\prime(0)|=\delta>0$.  Then, for any $\eta$ such that
  $0<\eta<\delta$, \begin{equation}\label{eq14} z\in D_\eta\implies
  |f(z)|>\left(\frac{\delta-\eta}{1-\eta\delta}\right)|z|.
  \end{equation} Further, the restricted function
  $f:D_\eta\to D_1$ takes on each value $w\in
  D_{\left(\frac{\delta-\eta}{1-\eta\delta}\right)\eta}$ exactly once.
\end{prop}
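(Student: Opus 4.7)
The plan is to study the auxiliary function $g(z) = f(z)/z$, which extends holomorphically to $D_1$ via $g(0) = f'(0)$ with $|g(0)| = \delta$; the Schwarz Lemma applied to $f$ gives $|g(z)| \le 1$ throughout, and (excluding the degenerate case $\delta = 1$ in which $f$ is a rotation and \eqref{eq14} holds with equality) the maximum principle upgrades this to $g : D_1 \to D_1$. I would then apply the pseudohyperbolic contraction property recalled just after \eqref{eq0} to $g$ at the points $z$ and $0$, obtaining $\rho_H(g(z), g(0)) \le \rho_H(z,0) = |z|$, so $g(z) \in \overline{K(g(0), |z|)}$.

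The next step is to convert this pseudohyperbolic containment into a Euclidean lower bound on $|g(z)|$ by invoking formula \eqref{eq0}: the disk $K(g(0), |z|)$ equals the Euclidean disk centered at $\frac{1-|z|^2}{1-|z|^2\delta^2}g(0)$ of radius $\frac{|z|(1-\delta^2)}{1-|z|^2\delta^2}$. Subtracting the radius from the modulus of the center and using the factorings
$$(1-|z|^2)\delta - |z|(1-\delta^2) = (\delta-|z|)(1+\delta|z|), \qquad 1 - \delta^2|z|^2 = (1-\delta|z|)(1+\delta|z|)$$
yields the clean bound $|g(z)| \ge \frac{\delta - |z|}{1 - \delta|z|}$. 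The function $t \mapsto \frac{\delta - t}{1 - \delta t}$ has derivative $\frac{\delta^2 - 1}{(1 - \delta t)^2}$, which is strictly negative for $\delta < 1$, so replacing $|z|$ by the strictly larger value $\eta$ strictly decreases the right-hand side, and multiplying through by $|z|$ gives \eqref{eq14}.

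For the univalence-on-a-small-disk claim, I would pass to the boundary: \eqref{eq14} extends by continuity to give $|f(z)| \ge \frac{(\delta - \eta)\eta}{1 - \eta\delta}$ on $\partial D_\eta$. Fixing any $w$ with $|w| < \frac{(\delta - \eta)\eta}{1 - \eta\delta}$ then yields $|{-}w| < |f(z)|$ on $\partial D_\eta$, and Rouch\'e's Theorem implies that $f(z)$ and $f(z) - w$ have the same number of zeros inside $D_\eta$. Since $f(0) = 0$ and \eqref{eq14} shows $f$ has no other zero in $D_\eta$, this common count is one, so $w$ has exactly one preimage in $D_\eta$.

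The one step that takes some care is the algebraic factoring that collapses the Euclidean description of the pseudohyperbolic disk into the compact expression $\frac{\delta - |z|}{1 - \delta|z|}$; the simultaneous appearance of the factor $1 + \delta|z|$ in numerator and denominator is what produces the symmetric pseudohyperbolic form and makes the dependence on $|z|$ monotone, so that estimating at $|z| = \eta$ is legitimate and the bound sharpens to the form stated in \eqref{eq14}.
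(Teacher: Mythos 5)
Your argument is correct and follows the paper's proof almost line for line: both define the auxiliary map $h(z)=f(z)/z$, invoke the Schwarz Lemma to view it as a self-map of $D_1$, apply the pseudohyperbolic contraction $\rho_H(h(z),h(0))\le|z|$, convert via \eqref{eq0} to the Euclidean bound $|h(z)|\ge\frac{\delta-|z|}{1-\delta|z|}$, exploit monotonicity of $t\mapsto\frac{\delta-t}{1-\delta t}$, and finish the covering claim with Rouch\'e's Theorem.

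The one place you diverge is in the Rouch\'e step, and your version is in fact a slight streamlining. The paper is cautious about strictness on the contour: for a given $z_1$ (respectively $w$) it chooses $\epsilon<\eta$ so that the strict inequality \eqref{eq14} holds on $\partial D_\epsilon$, applies Rouch\'e there, and then lets $\epsilon\to\eta^-$; it also treats injectivity and surjectivity as two separate passes. You instead observe that the nonstrict bound $|f|\ge\frac{(\delta-\eta)\eta}{1-\eta\delta}$ on $\partial D_\eta$ (valid since $\eta<1$, so $f$ is defined and continuous there) together with the strict bound $|w|<\frac{(\delta-\eta)\eta}{1-\eta\delta}$ already gives the strict domination $|{-}w|<|f|$ required by Rouch\'e, so a single application on $\partial D_\eta$ yields ``exactly one preimage'' at once. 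This is legitimate and avoids the limiting argument. You also explicitly flag the degenerate $\delta=1$ case (where $f$ is a rotation and $h$ is a unimodular constant), which the paper's proof passes over silently; strictly speaking, when $\delta=1$ the inequality \eqref{eq14} does not hold with equality but simply fails as stated, though the covering conclusion is then trivial since $f$ is a rotation. It would also be worth making explicit, as the paper does, that the zero of $f$ at the origin is simple because $f'(0)\ne0$, so the Rouch\'e count is one rather than merely ``at least one.''
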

\begin{proof}
  The proof of the first part uses the geometric properties of $\rho_H$.
  Given $f(z)$ holomorphic on $D_1$ with $f(0)=0$, let
  $$h(z)=\left\{\begin{array}{ll} f(z)/z&\mbox{if
    $z\ne0$}\\f^\prime(0)&\mbox{if $z=0$}\end{array}\right..$$ By the
  Schwarz Lemma, $h(z)$ is a holomorphic map $D_1\to D_1$, and
$$\rho_H(h(z),h(0))\le\rho_H(z,0)=|z|,$$ that is,
  $h(z)\in{\overline{K}}(h(0),|z|)$, and by subtracting radius from
  the magnitude of the center in (\ref{eq0}), we get a minimum
  distance from the origin for $|z|\le|h(0)|$:
$$|h(z)|\ge\frac{|h(0)|-|z|}{1-|z||h(0)|}.$$ The RHS is a decreasing
function of $|z|$, so for $0<|z|<\eta<\delta=|h(0)|$,
$$\left|\frac{f(z)}z\right|>\frac{\delta-\eta}{1-\eta\delta}.$$ The
inequality (\ref{eq14}) follows.  In fact, this shows there is a
nonlinear inequality, $$|f(z)|\ge\frac{\delta-|z|}{1-|z|\delta}|z|.$$

  The proof of the second part uses Rouch\'e's Theorem, which we
  recall as follows: Given a contour $\Gamma\subseteq\co$ and two functions
  $f(z)$, $g(z)$ analytic inside and on $\Gamma$, if $|f(z)|>|g(z)|$ at
  each point on $\Gamma$, then $f$ and $f+g$ have the same number of zeros
  inside $\Gamma$ (possibly including multiplicities).

  Starting with the one-to-one claim, let $z_1$ be any point in
  $D_\eta$ such that $f(z_1)\in
  D_{\left(\frac{\delta-\eta}{1-\eta\delta}\right)\eta}$.  Then there
  is some $\epsilon$, $0<\epsilon<\eta$, so that
  $|f(z_1)|<\left(\frac{\delta-\eta}{1-\eta\delta}\right)\epsilon<\left(\frac{\delta-\eta}{1-\eta\delta}\right)\eta$.
  By (\ref{eq14}),
  $\left(\frac{\delta-\eta}{1-\eta\delta}\right)|z_1|<|f(z_1)|<\left(\frac{\delta-\eta}{1-\eta\delta}\right)\epsilon$,
  so $z_1\in D_\epsilon\subsetneq D_\eta$.  Let $\Gamma$ be the
  boundary circle of $D_\epsilon$.  Also by (\ref{eq14}), $f$ has
  exactly one zero inside $\Gamma$, at the origin, with multiplicity
  $1$.  Let $g(z)$ be the constant function $-f(z_1)$.  Then, for any
  $z\in \Gamma$,
  $|f(z)|>\left(\frac{\delta-\eta}{1-\eta\delta}\right)\epsilon$ by
  (\ref{eq14}), so by construction, $f$ and $g$ satisfy the hypotheses
  of Rouch\'e's Theorem.  The conclusion is that
  $f(z)+g(z)=f(z)-f(z_1)$ has exactly one zero in the disk
  $D_\epsilon$, and the claim follows since $\epsilon$ can be
  arbitrarily close to $\eta$.

  The proof of the onto claim is similar.  Let $w$ be any point in
  $D_{\left(\frac{\delta-\eta}{1-\eta\delta}\right)\eta}$, and let
  $\epsilon$ be as above, with
  $|w|<\left(\frac{\delta-\eta}{1-\eta\delta}\right)\epsilon<\left(\frac{\delta-\eta}{1-\eta\delta}\right)\eta$.
  Again letting $\Gamma$ be the boundary of $D_\epsilon$, and letting
  $g(z)$ be the constant function $-w$, we get the same conclusion,
  that $f(z)-w$ has exactly one zero in $D_\epsilon\subseteq D_\eta$.
\end{proof}
In Lemma \ref{lem1.2}, the goal is to get an inverse for $f:D_1\to
D_1$ defined on a large neighborhood of $0$ in the target.  We denote
the radius $\left(\frac{\delta-\eta}{1-\eta\delta}\right)\eta=s$ and
consider for simplicity the condition $s>\frac12$.  The set-up of
(\ref{eq23}) is that $\delta=r/2<1$, and we can pick any $\eta$ in
$(0,r/2)$, so $s>\frac12$ becomes:
$$\left(\frac{r/2-\eta}{1-\eta
r/2}\right)\eta>\frac12\iff0>4\eta^2-3r\eta+2.$$ If the RHS polynomial
has distinct real roots, then there exists an $\eta$ satisfying the
inequality between the two roots.  The roots are given by the
quadratic formula, $\frac18(3r\pm\sqrt{9r^2-32})$.  The condition
$r>4\sqrt{2}/3$ guarantees two roots, and then the midpoint is always
a solution, $\eta=3r/8$, in the interval $(0,r/2)$.  This explains the
choices of constants in Lemma \ref{lem1.2} and its Proof.

\end{document}